\newtheorem{theorem}{Theorem}
\newtheorem{lemma}{Lemma}
\newtheorem{proposition}{Proposition}
\newtheorem{construction}{Construction}
\title{Equitable 2-partitions of Johnson graphs with the second eigenvalue \thanks{The reported study was funded by RFBR according to the research project N 18-31-00126 }}
\author{Konstantin Vorob'ev %
  \thanks{E-mail address: \texttt{vorobev@math.nsc.ru}}}
\affil{Sobolev Institute of Mathematics, Novosibirsk, Russia;\\ Novosibirsk State University, Novosibirsk, Russia}
\begin{document}

\maketitle

\begin{abstract}
We study equitable $2$-partitions of the Johnson graphs
$J(n,w)$ with a quotient matrix containing the eigenvalue $\lambda_2(w,n)=(w-2)(n-w-2)-2$ in its spectrum.   
For any $w\geq 4$ and $n\geq 2w$, we find all admissible quotient matrices of such partitions, and characterize  all these partitions for $w\geq 4$, $n>2w$, and for $w\geq 7$, $n=2w$, up to equivalence.  
\end{abstract}

\section{Introduction}

An $r$-partition $(C_1,C_2,\dots, C_r)$ of the vertex set of a graph is called {\it equitable} with a {\it quotient matrix} $S=(s_{ij})_{i,j\in \{1,2,\dots, r\}}$ if every vertex from $C_i$ has exactly $s_{ij}$ neighbours in $C_j$. The sets $C_1, C_2,\dots, C_r$ are called {\it cells} of the partition. Equitable partitions are also known as perfect colorings, regular partition and partition designs.

A subset of a vertex set of a graph is called a {\it completely regular code} if the distance partition from the subset is equitable. Clearly, any cell of an equitable 2-partition is a completely regular code. 

It is known \cite{Cvetkovic} that an eigenvalue of a quotient matrix of an equitable partition of a graph must be an eigenvalue of the adjacency matrix of this graph. In this paper, by an eigenvalue of a partition we will understand an eigenvalue of its quotient matrix.

The vertices of the Johnson graph $J(n,w)$ are the binary vectors of length $n$ with $w$ ones, where two
vectors are adjacent if they have exactly $w-1$ common ones. This graph is distance-regular (see, for example, \cite{BCN}) with $w+1$ distinct eigenvalues $\lambda_i(n,w)=(w-i)(n-w-i)-i$, $i=0,1, \dots w$.  

In this work we consider equitable $2$-partitions of a Johnson graph $J(n,w)$, $w\geq 3$ with a quotient matrix having eigenvalue $\lambda_2(n,w)$ (another eigenvalue is a degree of the graph $w(n-w)$). The problem of existence of equitable $2$-partition of Johnson graphs with given quotient matrix is far from solving. In particular, it includes a famous Delsarte conjecture about non-existence of $1$-perfect codes in the Johnson scheme (see, for example, \cite{AM2011SEMR}). 

Equitable $2$-partitions were studied by Avgustinovich and Mogilnykh in several papers \cite{AM2008,AM2011JAIM,AM2011SEMR,Mog2007,Mog2009}.

One of possible ways to solve the problem of existence is to characterize partitions with certain eigenvalues. Equitable $2$-partitions of the graph $J(n,w)$ with the eigenvalue $\lambda_1(n,w)$ were characterized by Meyerowitz \cite{Meyer}. In \cite{GG2013} Gavrilyuk and Goryainov found all realizable quotient matrices (i.e. quotient matrices of some existing partitions) of equitable $2$-partitions of $J(n,3)$ with second eigenvalue $\lambda_2(n,3)$ for odd $n$ and announced the solution for even $n$.

In this paper we study equitable $2$-partitions of Johnson graphs $J(n,w)$, $w\geq 4$.
The paper is organized as follows. In Section $2$, we introduce all necessary definition and basic statements. Section $3$ is devoted to $\lambda_1(n,w)$-eigenfunctions of $J(n,w)$ taking not more than $3$ distinct values and their properties - the main tool in this paper. In Section $4$, we prove that there are no equitable $2$-partitions of $J(n,w)$ with second eigenvalue for $w\geq 4$ and $n>2w$. In Section $5$ we find all realizable quotient matrices of such partitions for $n=2w$, $w\geq 4$, and obtain a full characterization for $w\geq 7$. In particular, we find $2$ new infinite series of partitions for $n=2w$.

\section{Preliminaries}

Let $G=(V,E)$ be a graph. A real--valued function $f:V\longrightarrow{\mathbb{R}}$ is called a {\em $\lambda$--eigenfunction} of $G$ if the equality $$\lambda\cdot f(x)=\sum_{y\in{(x,y)\in E}}f(y)$$ holds for any $x\in V$ and $f$ is not the all-zero function. Note that the vector of values of a $\lambda$--eigenfunction is an eigenvector of the adjacency matrix of $G$ with an eigenvalue $\lambda$. The support of a real--valued function $f$ is the set of nonzeros of $f$. The cardinality of the support of $f$ is denoted by $|f|$.

Given a real-valued $\lambda_i(n,w)$-eigenfunction $f$ of
$J(n,w)$ for some $i\in \{0,1,\dots,w\}$ and $j_1,j_2\in
\{1,2,\dots,n\}$, $j_1<j_2$, define a {\it a partial difference of $f$} -- a real-valued function
$f_{j_1,j_2}$ as follows: for any vertex $y=(y_1,y_2, \dots
,y_{j_1-1},y_{j_1+1}, \dots ,y_{j_2-1},y_{j_2+1},\dots,y_n)$ of
$J(n-2,w-1)$
\begin{flushright}$f_{j_1,j_2}(y)=f(y_1,y_2, \dots
,y_{j_1-1},1,y_{j_1+1}, \dots ,y_{j_2-1},0,y_{j_2+1},\dots,y_n)\,$
\end{flushright} \begin{flushright}
 $-f(y_1,y_2, \dots ,y_{j_1-1},0,y_{j_1+1}, \dots ,y_{j_2-1},1,y_{j_2+1}, \dots ,y_n).$
\end{flushright}

\begin{lemma}\label{L:part_diff}(\cite{VMV})
 If f is a $\lambda_i(n,w)$-eigenfunction of
$J(n,w)$ then $f_{j_1,j_2}$ is a
$\lambda_{i-1}(n-2,w-1)$-eigenfunction of $J(n-2,w-1)$ or the all-zero function.
\end{lemma}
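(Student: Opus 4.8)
The plan is to verify the $\lambda_{i-1}(n-2,w-1)$-eigenfunction equation for $f_{j_1,j_2}$ directly, vertex by vertex. First I would fix a vertex $y$ of $J(n-2,w-1)$ and name its two lifts to $J(n,w)$: let $u=u(y)$ be the vertex with a $1$ in coordinate $j_1$ and a $0$ in coordinate $j_2$, and $v=v(y)$ the vertex with a $0$ in $j_1$ and a $1$ in $j_2$, so that by definition $f_{j_1,j_2}(y)=f(u)-f(v)$. Writing the eigenfunction equation $\lambda_i(n,w)f(u)=\sum_{u'\sim u}f(u')$, the analogous equation for $v$, and subtracting, reduces everything to understanding how the neighbourhood of $u$ in $J(n,w)$ compares with that of $v$.

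The main step is a careful bookkeeping of these neighbourhoods. Every neighbour of $u$ is obtained by moving one of its $w$ ones onto one of its $n-w$ zero coordinates, and I would sort these moves into four types: (i) moving the one from $j_1$ onto $j_2$, which produces exactly $v$; (ii) moving the one from $j_1$ onto a zero of $y$, producing a vertex with zeros in both $j_1$ and $j_2$; (iii) moving a one of $y$ onto $j_2$, producing a vertex with ones in both $j_1$ and $j_2$; and (iv) moving a one of $y$ onto a zero of $y$, producing the lift $u(y')$ of the neighbour $y'$ of $y$ in $J(n-2,w-1)$. Running the same analysis for $v$ yields $u$ itself, the very same vertices of types (ii) and (iii), and the lifts $v(y')$ of the neighbours $y'$ of $y$. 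The crucial observation is that the type-(ii) and type-(iii) contributions to $\sum_{u'\sim u}f(u')$ and to $\sum_{v'\sim v}f(v')$ are identical and therefore cancel upon subtraction.

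After this cancellation the subtracted equation collapses to
$$\lambda_i(n,w)\bigl(f(u)-f(v)\bigr)=\bigl(f(v)-f(u)\bigr)+\sum_{y'\sim y}\bigl(f(u(y'))-f(v(y'))\bigr),$$
that is, $\sum_{y'\sim y}f_{j_1,j_2}(y')=\bigl(\lambda_i(n,w)+1\bigr)f_{j_1,j_2}(y)$. It then remains to check the elementary identity $\lambda_i(n,w)+1=\lambda_{i-1}(n-2,w-1)$, which is immediate from $\lambda_i(n,w)=(w-i)(n-w-i)-i$. Hence $f_{j_1,j_2}$ satisfies the $\lambda_{i-1}(n-2,w-1)$-eigenfunction equation at every vertex of $J(n-2,w-1)$; if it is not the all-zero function it is a genuine eigenfunction, which is exactly the claim.

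The only real obstacle is the bookkeeping of the second paragraph: one must confirm that the awkward type-(ii) and type-(iii) neighbours — those carrying two zeros or two ones in the distinguished coordinates, which are \emph{not} themselves lifts of vertices of $J(n-2,w-1)$ — occur for $u$ and for $v$ with exactly the same multiplicities and the same $f$-values, so that they disappear in the difference. A reassuring sanity check is that the four types account for $1+(n-w-1)+(w-1)+(w-1)(n-w-1)=w(n-w)$ neighbours, which is precisely the degree of $J(n,w)$.
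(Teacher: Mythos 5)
Your proof is correct. Note that the paper itself gives no argument for this lemma --- it is imported from \cite{VMV} with a citation only --- and your direct verification is exactly the standard one: the bookkeeping is right (the neighbours of $u$ and $v$ carrying two zeros or two ones in coordinates $j_1,j_2$ coincide as sets, each arising with multiplicity one from the unique swap $j_1\to t$ resp.\ $j_2\to t$, so they cancel in the difference), the degree count $1+(n-w-1)+(w-1)+(w-1)(n-w-1)=w(n-w)$ confirms nothing is missed, and the identity $\lambda_i(n,w)+1=(w-i)(n-w-i)-i+1=\lambda_{i-1}(n-2,w-1)$ closes the argument. The only cosmetic remark is that your conclusion should explicitly cover the degenerate alternative already present in the statement: the computation shows $f_{j_1,j_2}$ satisfies the eigenfunction equation everywhere, and it is a genuine eigenfunction precisely when it is not identically zero --- which you do say, so the proof is complete as written.
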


As we see, given an eigenfunction $f$ from Lemma \ref{L:part_diff} we
obtain the eigenfunctions $f_{j_1,j_2}$ in the Johnson
graph with smaller parameters for every distinct coordinates 
$j_1,j_2$. Note, that in some cases the resulting function $f_{j_1,j_2}$ is
just the all-zero function. Moreover, the set of all-zero partial differences induces a partition on the set of coordinates positions.

\begin{lemma}\label{L:zero_part_diff}(\cite{VMV})
 Let $f\in J(n,w)\rightarrow \mathbb{R}$. Let $f_{i_1,i_2}\equiv 0$ and $f_{i_1,i_3}\equiv 0$ for some pairwise distinct $i_1,i_2,i_3\in \{1,2,\dots n\}$. Then $f_{i_2,i_3}\equiv 0$.
\end{lemma}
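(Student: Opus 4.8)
The plan is to first reinterpret the hypothesis $f_{i,j}\equiv 0$ as a symmetry condition on $f$, and then derive the conclusion from the fact that the transposition of $i_2$ and $i_3$ is a product of the transpositions of $i_1,i_2$ and of $i_1,i_3$.

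First I would unwind the definition of the partial difference. Fix a binary vector of weight $w$ and record only its values $a,b,c$ in the coordinates $i_1,i_2,i_3$, writing the remaining $n-3$ coordinates as a fixed tail $r$ subject to the global weight constraint $a+b+c+|r|=w$; denote such a vertex by $f(a,b,c;r)$. Saying $f_{i_1,i_2}\equiv 0$ then means precisely that $f(1,0,c;r)=f(0,1,c;r)$ for every admissible $c$ and $r$, i.e. $f$ is unchanged when the values in positions $i_1$ and $i_2$ are exchanged; likewise $f_{i_1,i_3}\equiv 0$ says $f(1,b,0;r)=f(0,b,1;r)$ for all admissible $b,r$. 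In other words, the hypotheses assert that $f$ is invariant under the coordinate transpositions $\tau_{i_1 i_2}$ and $\tau_{i_1 i_3}$, and the desired conclusion $f_{i_2,i_3}\equiv 0$ is exactly the invariance of $f$ under $\tau_{i_2 i_3}$.

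With this reformulation the lemma is immediate from the identity $\tau_{i_2 i_3}=\tau_{i_1 i_2}\,\tau_{i_1 i_3}\,\tau_{i_1 i_2}$ in the symmetric group acting on the three positions: a function fixed by the two generating transpositions is fixed by their product. If one prefers to avoid the group-theoretic language, the same conclusion follows from a short case analysis on the value $a$ in position $i_1$. For $a=0$ one chains $f(0,1,0;r)=f(1,0,0;r)=f(0,0,1;r)$, using $f_{i_1,i_2}\equiv 0$ with spectator value $c=0$ and then $f_{i_1,i_3}\equiv 0$ with spectator value $b=0$; for $a=1$ one chains $f(1,1,0;r)=f(0,1,1;r)=f(1,0,1;r)$ analogously. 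Either way $f(a,1,0;r)=f(a,0,1;r)$ for both values of $a$ and every admissible tail $r$, which is exactly the statement $f_{i_2,i_3}\equiv 0$.

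There is no serious obstacle here; the only point requiring care is the bookkeeping of the weight constraint, which guarantees that in each step the two vertices being compared are genuine vertices of $J(n,w)$ and that the spectator coordinate is permitted to take the value needed in the chain. I would also note that the order convention $j_1<j_2$ in the definition of the partial difference is harmless, since reversing the two coordinates merely negates $f_{j_1,j_2}$ and hence does not affect whether it is identically zero.
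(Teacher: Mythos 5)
Your proof is correct: the reformulation of $f_{i_1,i_2}\equiv 0$ as invariance of $f$ under the coordinate transposition $\tau_{i_1 i_2}$ is exact (vertices with equal values in the two positions are fixed by the swap anyway), the conjugation identity $\tau_{i_2 i_3}=\tau_{i_1 i_2}\tau_{i_1 i_3}\tau_{i_1 i_2}$ is valid, and your two-case chaining on the value at position $i_1$ verifies $f_{i_2,i_3}\equiv 0$ directly, with the weight bookkeeping handled properly. The paper itself gives no proof, citing \cite{VMV} instead, and your argument is precisely the short, standard one behind that reference, so there is nothing further to compare.
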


We will say that functions $f_1,f_2:J(n,w)\rightarrow \mathbb{R}$ are equivalent if there exist a permutation $\pi\in S_n$ such that $\forall x\in J(n,w)$ we have $f_1(x)=f_2(\pi x)$. Two equitable $2$-partitions $(C_1,C_2)$ and $(C_1',C_2')$ of the graph $J(n,w)$ are equivalent if the characteristic function $\chi_{C_1}$ is equivalent to $\chi_{C_1'}$ or $\chi_{C_2'}$. 

Let us discuss some basic properties of equitable $2$-partitions of $J(n,w)$. Such a partition has a quotient matrix $[[a,b][c,d]]$. Since a Johnson graph is regular of degree $w(n-w)$, we have $a+b=c+d=w(n-w)$, where $a,b,c,d$ are non-negative integers. Since $J(n,w)$ is connected, $b>0$ and $c>0$. Without loss of generality, we always consider the case $b\geq c$. It easy to prove, that $a-c$ is an eigenvalue of the quotient matrix. Therefore, for $a-c=\lambda_2(n,w)$ we have

\begin{proposition}\label{P:matrix}
Let $(C_1,C_2)$ be an equitable $2$-partition of $J(n,w)$ with second eigenvalue. Then the partition has the quotient matrix
$[[w(n-w)-b,b][2n-2-b,w(n-w)-2n+2+b]]$ for some $b \in \{n-1,n,\dots, 2n-1\}$.    
\end{proposition}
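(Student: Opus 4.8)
The plan is to reduce everything to the two structural constraints satisfied by any equitable $2$-partition of a regular graph and then solve for all entries in terms of a single free parameter. Writing the quotient matrix as $\begin{pmatrix} a & b \\ c & d \end{pmatrix}$ as in the excerpt, I would first invoke regularity: since $J(n,w)$ has degree $w(n-w)$, each row sum of the quotient matrix equals $w(n-w)$, so $a = w(n-w)-b$ and $d = w(n-w)-c$. This fixes the diagonal once the off-diagonal entries are known and reduces the task to determining $c$ (in terms of $b$) and the admissible range of $b$.

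Next I would use the eigenvalue hypothesis. The all-ones vector is an eigenvector of the quotient matrix with eigenvalue $w(n-w)$, and for a $2\times 2$ quotient matrix the remaining eigenvalue equals the trace minus $w(n-w)$, which simplifies to $a-c$; the assumption that the partition carries the second eigenvalue is thus exactly $a-c=\lambda_2(n,w)$, as already recorded in the excerpt. The one genuine computation is to re-express $\lambda_2(n,w)=(w-2)(n-w-2)-2$ in terms of the degree: expanding $(w-2)(n-w-2)$ and using $w+(n-w)=n$ gives $\lambda_2(n,w)=w(n-w)-2n+2$. Substituting $a=w(n-w)-b$ into $a-c=\lambda_2(n,w)$ then yields $b+c=2n-2$, hence $c=2n-2-b$ and, via $d=w(n-w)-c$, the value $d=w(n-w)-2n+2+b$. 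This reproduces precisely the claimed matrix $[[w(n-w)-b,b],[2n-2-b,w(n-w)-2n+2+b]]$.

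Finally I would pin down which integers $b$ occur. Integrality is automatic once $b\in\mathbb{Z}$. The off-diagonal entries must be positive, since $J(n,w)$ is connected, and the diagonal entries must be non-negative; combined with the normalization $b\ge c$ adopted in the excerpt and the relation $b+c=2n-2$, these become linear inequalities in $b$. Concretely, $b\ge c=2n-2-b$ gives the lower bound $b\ge n-1$, while positivity and non-negativity of $c=2n-2-b$ control the upper end, and one checks that $a=w(n-w)-b\ge 0$ and $d\ge 0$ impose nothing stronger in the regime $w\ge 4$, $n\ge 2w$. I do not expect a serious obstacle, as the statement is essentially a bookkeeping exercise; the only place demanding care is the bounding of $b$, where one must track integrality together with the connectivity constraints $b,c>0$ at the boundary rather than merely the non-negativity of the entries.
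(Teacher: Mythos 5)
Your proposal is correct and follows essentially the same route as the paper, which obtains the matrix from the row-sum condition $a+b=c+d=w(n-w)$, the observation that $a-c$ is the non-principal eigenvalue of the quotient matrix (so the hypothesis reads $a-c=\lambda_2(n,w)=w(n-w)-2n+2$, giving $b+c=2n-2$), and the normalization $b\geq c$, which yields $b\geq n-1$. If anything, your bounding of $b$ is slightly tighter than the statement: connectivity forces $c=2n-2-b>0$, i.e.\ $b\leq 2n-3$, a subrange of the paper's $\{n-1,n,\dots,2n-1\}$, so the proposition as stated still follows.
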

We will also need the following useful well-known property of equitable $2$-partitions for $n=2w$. 

\begin{lemma}\label{L:antipodality}
Let $(C_1,C_2)$ be an equitable $2$-partition of $J(2w,w)$ with second eigenvalue. Let $x\in C_1 (C_2)$. Take the vertex $x'\in J(2w,w)$ such that $x$ and $x'$ have distinct values in all $2w$ coordinate positions. Then $x'\in C_1 (C_2)$.   
\end{lemma}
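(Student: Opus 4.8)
The plan is to encode the partition as a single $\lambda_2(2w,w)$-eigenfunction and then exploit the antipodal symmetry of $J(2w,w)$ through that eigenfunction. By Proposition~\ref{P:matrix} the quotient matrix has the form $\begin{pmatrix} a & b \\ c & d\end{pmatrix}$ with $a+b=c+d=w(n-w)$ and $a-c=\lambda_2(2w,w)$. Since the row sums are equal, $(1,1)^{\mathsf T}$ is an eigenvector for the eigenvalue $w(n-w)$, and a direct check shows that $(b,-c)^{\mathsf T}$ is an eigenvector for the eigenvalue $a-c=\lambda_2(2w,w)$. Assigning the value $b$ to every vertex of $C_1$ and $-c$ to every vertex of $C_2$ therefore produces the function $g=b\,\chi_{C_1}-c\,\chi_{C_2}$ which, because the partition is equitable, is a $\lambda_2(2w,w)$-eigenfunction of $J(2w,w)$. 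Recall that $b>0$ and $c>0$, so $g$ takes a strictly positive value on $C_1$ and a strictly negative value on $C_2$; in particular $g$ \emph{separates} the two cells.

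Next I would introduce the antipodal map $\sigma\colon x\mapsto \overline{x}$ sending each vertex to its complement. For $n=2w$ the complement $\overline{x}$ again has exactly $w$ ones, $\sigma$ preserves adjacency, and $\sigma$ is a fixed-point-free involution; moreover $\overline{x}$ is the unique vertex at the maximal distance $w$ from $x$, since attaining distance $w$ forces one to remove all $w$ ones and add all $w$ zeros of $x$. Thus $\sigma$ is an automorphism of $J(2w,w)$ whose permutation matrix coincides with the distance-$w$ adjacency matrix $A_w$. Because $\sigma$ is an automorphism, it maps $\lambda_2(2w,w)$-eigenfunctions to $\lambda_2(2w,w)$-eigenfunctions; the conclusion of the lemma is exactly the assertion that $g\circ\sigma=g$, i.e.\ that $\sigma$ acts as the identity on the $\lambda_2(2w,w)$-eigenspace.

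The crux, and the step I expect to be the main obstacle, is to prove that $\sigma$ fixes the $\lambda_2(2w,w)$-eigenspace pointwise. Since $\sigma^2=\mathrm{id}$, the matrix $A_w=P_\sigma$ satisfies $A_w^2=I$ and hence acts as $\pm 1$ on each eigenspace $U_i$ of the adjacency matrix; the required fact is that this sign equals $(-1)^i$, so that it is $+1$ precisely on the even-index eigenspaces, and in particular on $U_2$. This is the standard statement that $J(2w,w)$ is an antipodal distance-regular graph whose folded graph has spectrum $\{\lambda_{2i}(2w,w)\}$; I would either cite it (e.g.\ from \cite{BCN}) or establish it directly by evaluating the eigenvalue of the distance-$w$ relation on $U_i$ (an Eberlein polynomial) and checking its sign, with the small cases $J(2,1)$ and $J(4,2)$ serving as a sanity check of the normalisation. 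Granting $\sigma|_{U_2}=+\mathrm{id}$, we obtain $g(\overline{x})=g(x)$ for every vertex $x$; since $g$ separates $C_1$ from $C_2$, the vertices $x$ and $\overline{x}=x'$ must lie in the same cell, which is the claim.
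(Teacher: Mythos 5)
The paper states Lemma~\ref{L:antipodality} without proof, presenting it as a well-known property, so there is no in-paper argument to compare against; your proposal is a correct, self-contained derivation of exactly the standard fact being invoked. Your reduction is sound: $g=b\chi_{C_1}-c\chi_{C_2}$ is indeed a $\lambda_2(2w,w)$-eigenfunction (on $C_2$ the verification needs $b-d=-(a-c)$, which is precisely the row-sum identity $a+b=c+d$), we have $b,c>0$ by connectedness, and complementation $\sigma$ is indeed the distance-$w$ relation, since a support of size $w=2w-w$ disjoint from $\mathrm{supp}(x)$ must be its exact complement. The step you flagged as the crux also goes through, in either of the two ways you sketch. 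Via Eberlein polynomials: the eigenvalues $\lambda_i(2w,w)=(w-i)^2-i$ are pairwise distinct, so the $\lambda_2$-eigenspace of the adjacency matrix coincides with the common eigenspace $U_2$ of the Bose--Mesner algebra (for a distance-regular graph $A_w$ is a polynomial in $A_1$, hence acts as a scalar on each $A_1$-eigenspace), and that scalar is
\begin{equation*}
E_w(i)=\sum_{r}(-1)^r\binom{i}{r}\binom{w-i}{w-r}\binom{w-i}{w-r},
\end{equation*}
where $\binom{i}{r}\neq 0$ forces $r\le i$ and $\binom{w-i}{w-r}\neq 0$ forces $r\ge i$, so only $r=i$ survives and $E_w(i)=(-1)^i$; in particular the scalar on $U_2$ is $+1$. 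Even more cheaply, since $\sigma^2=\mathrm{id}$ already makes the scalar $\pm 1$, evaluating one explicit $\lambda_2$-eigenfunction, e.g.\ $f(x)=(x_1-x_2)(x_3-x_4)$, which is visibly invariant under the coordinatewise complementation $x_i\mapsto 1-x_i$, pins the sign to $+1$. Hence $g(\overline{x})=g(x)$, and since $g$ is positive exactly on $C_1$ and negative exactly on $C_2$, antipodal vertices lie in the same cell. A reassuring sanity check that the parity mechanism is the right one: for the eigenvalue $\lambda_1$ the sign is $(-1)^1=-1$, consistent with the Meyerowitz partitions (e.g.\ $C_1=\{x\mid x_1=1\}$ of $J(2w,w)$), where complementation swaps the two cells --- so the restriction to the \emph{second} eigenvalue in the lemma is genuinely used by your argument, as it should be.
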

In the following Section, we will be focused on eigenfunctions taking a few number of values. We are going to find and prove some structural properties of such functions that will help us to characterize equitable $2$-partitions later.

\section{Eigenfunctions taking three values}

Consider a characteristic function of one cell of some equitable $2$-partition of $J(n,w)$ with the eigenvalue $\lambda_2(n,w)$ and take some partial difference of this function. By Lemma \ref{L:part_diff} the resulting function is a $\lambda_1(n-2,w-1)$-eigenfunction of $J(n-2,w-1)$ or the all-zero function. In any case, this partial difference may take only three distinct values $-1$, $0$, $1$. As we see, the problem of constructing equitable $2$-partition with $\lambda_2(n,w)$ may be reduced to the problem of constructing $\lambda_1(n-2,w-1)$-eigenfunctions with some restrictions. The following theorem gives a full classification of $\lambda_1(n-2,w-1)$-eigenfunctions we are interested in.   

\begin{theorem}\label{T:Derivates}
 If $f:J(n,w)\rightarrow \{-1,0,1\}$ is a $\lambda_1(n,w)$-eigenfunction of
$J(n,w)$, $f\not\equiv 0$, $w\geq 2$, then $f$ is equivalent up to multiplication by a non-zero constant to one of the following functions:
\begin{enumerate}
 \item $f_1(x)= \begin{cases}
1, x_1=1,\,x_2=0\\
-1, x_1=0,\,x_2=1\\
0,&\text{otherwise.} \end{cases}$, $x=(x_1,x_2,\dots,x_n)\in J(n,w)$, $w\geq 2$ and $n\geq 2w$

 \item $f_2(x)= \begin{cases}
1, x_1=1,\,x_2=1\\
-1, x_1=0,\,x_2=0\\
0,&\text{otherwise.}  \end{cases}$, $x=(x_1,x_2,\dots,x_n)\in J(n,w)$, $w\geq 2$ and $n=2w$.

 \item $f_3(x)= \begin{cases}
1, x_1=1,\\
-1, x_1=0,\\
0,&\text{otherwise.}  \end{cases}$, $x=(x_1,x_2,\dots,x_n)\in J(n,w)$, $w\geq 2$ and $n=2w$.

  \item $f_4(x)= \begin{cases}
1, Supp(x)\subseteq \{1,2,\dots,\frac{n}{2}\},\\
-1,  Supp(x)\subseteq \{\frac{n}{2}+1,\frac{n}{2}+2,\dots, n\},\\
 0,&\text{otherwise.} \end{cases}$, $x=(x_1,x_2,\dots,x_n)\in J(n,w)$, $w=2$, $n \geq 2w$ and $n$ is even.

\end{enumerate}
\end{theorem}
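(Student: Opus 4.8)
The plan is to prove first that every such eigenfunction is an affine function of the coordinates, and then to classify the admissible coefficient patterns by a short combinatorial argument.

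\emph{Step 1 (reduction to affine functions).} Since $\lambda_1(n,w)$ carries index $i=1$, Lemma~\ref{L:part_diff} shows that each partial difference $f_{j_1,j_2}$ is a $\lambda_0(n-2,w-1)$-eigenfunction of $J(n-2,w-1)$ or the zero function. But $\lambda_0(n-2,w-1)=(w-1)(n-w-1)$ is the degree of $J(n-2,w-1)$, which is connected for $w\geq 2$, $n\geq 2w$, so its $\lambda_0$-eigenspace is spanned by the all-ones function; hence every $f_{j_1,j_2}$ is a constant $c_{j_1,j_2}\in\{-2,-1,0,1,2\}$, extended antisymmetrically by $c_{j_2,j_1}=-c_{j_1,j_2}$. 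Evaluating $f$ on the three vertices carrying a single one among three fixed positions $j_1,j_2,j_3$ (with the remaining $w-1$ ones placed arbitrarily elsewhere, possible since $n\geq w+2$) and telescoping gives the cocycle identity $c_{j_1,j_2}+c_{j_2,j_3}=c_{j_1,j_3}$. Fixing a base coordinate $i_0$ and setting $a_i:=c_{i,i_0}$ then yields $c_{j_1,j_2}=a_{j_1}-a_{j_2}$ for all pairs, so $g:=f-\sum_i a_ix_i$ has $g_{j_1,j_2}\equiv 0$ for every pair. Thus $g$ is invariant under every transposition of a selected and an unselected coordinate and, $J(n,w)$ being connected, is constant; therefore $f(x)=\sum_i a_ix_i+b$ for some constant $b$.

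\emph{Step 2 (normalization).} For any two coordinates $i,j$ there is an edge of $J(n,w)$ along which $f$ changes by $a_i-a_j$, so $f\in\{-1,0,1\}$ forces $a_i-a_j\in\{-2,-1,0,1,2\}$ for all $i,j$. In particular all $a_i$ are congruent modulo $1$, and replacing $(a_i)$ by $(a_i-\min_k a_k)$ while adjusting $b$ leaves $f$ unchanged, so I may assume each $a_i\in\{0,1,2\}$. Let $p,q,r$ count the coordinates with $a_i=2,1,0$, and let $s,t$ denote the numbers of selected coordinates of weight $2$ and $1$, so that $f=2s+t+b$. Since a $\lambda_1$-eigenfunction is orthogonal to constants, $\sum_x f(x)=0$, whence $b=-\tfrac{w}{n}(2p+q)$; together with $f\not\equiv 0$ this also shows that both $1$ and $-1$ are attained.

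\emph{Step 3 (classification).} It remains to determine the profiles $(p,q,r)$ for which $2s+t$ ranges within three consecutive integers, necessarily $\{-1-b,-b,1-b\}$. I would split on how many of the values $0,1,2$ occur among the $a_i$, exploiting the symmetry $a_i\mapsto 2-a_i$, which swaps $p$ and $r$ and replaces $f$ by $-f$ and is thus an allowed equivalence. If all three values occur, comparing the maximum and minimum of $2s+t$ and invoking $n\geq 2w$ shows that $p\geq 2$ or $r\geq 2$ always produces a spread exceeding $2$, forcing $p=r=1$, $q=n-2$; substituting $b$ gives $f=x_i-x_j$, i.e. $f_1$. If exactly two values occur they differ by $1$ or by $2$. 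In the difference-$2$ case $q=0$ and $f=2s+b$ lies in an arithmetic progression of step $2$, so the range of $s$ must be $\{s_0,s_0+1\}$; with $b=-\tfrac{w}{n}\cdot 2p$ this yields $p=1$ (dually $r=1$) and $n=2w$, giving $f_3$. In the difference-$1$ case $f=t+b$ must take all of $\{-1,0,1\}$, and the range of $t$ equals $2$ exactly when either $q=2$, where $b$ forces $n=2w$ and $f=f_2$, or $w=2$ with $q=r=n/2$, where $f=f_4$. A single attained value makes $f$ constant, contradicting $f\not\equiv 0$.

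\emph{Main obstacle.} The substantive work is entirely in Step 3: one must verify carefully that large multiplicities $p$ or $r$ force the spread of $2s+t$ to exceed $2$ — this is precisely where $n\geq 2w$ is used to exclude degenerate profiles — and one must correctly extract the parameter restrictions $n=2w$ for $f_2,f_3$ and $w=2$ for $f_4$ from the relation $b=-\tfrac{w}{n}(2p+q)$. Steps 1 and 2 are routine consequences of Lemmas~\ref{L:part_diff} and~\ref{L:zero_part_diff}.
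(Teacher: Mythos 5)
Your proposal is correct, but it reaches the key structural fact by a genuinely different route than the paper. The paper never proves linearity from scratch: it passes to the induced function on $J(n,1)$ and cites the Delsarte representation $f(x)=\alpha\sum_{i\,:\,x_i=1}a_i$ with \emph{real} weights $a_i$ and an unknown scalar $\alpha$, then classifies the multiset $\{a_i\}$ by comparing values of $f$ at well-chosen vertices (at most $3$ distinct weights; in the three-value case multiplicities $1,n-2,1$ with $b_1=-b_3$, $b_2=0$; in the two-value case $k_1\in\{1,2\}$, plus the special $w=2$ profile). You instead derive the affine form $f(x)=\sum_i a_ix_i+b$ self-containedly from Lemma~\ref{L:part_diff}: since $\lambda_0(n-2,w-1)$ is the degree of the connected graph $J(n-2,w-1)$, every partial difference is a constant $c_{j_1,j_2}$, your telescoping argument (valid since $n\geq w+2$) gives the cocycle identity $c_{j_1,j_2}=a_{j_1}-a_{j_2}$, and connectivity of $J(n,w)$ forces the remainder to be constant. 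This buys you \emph{integrality} of the weights for free ($c_{j_1,j_2}$ is a difference of two values of $f$), so the normalization $a_i\in\{0,1,2\}$ and the explicit relation $b=-\tfrac{w}{n}(2p+q)$ replace the paper's handling of the unknown $\alpha$ and its ad hoc value comparisons; in particular the parameter restrictions $n=2w$ for $f_2,f_3$ and $q=r=n/2$ for $f_4$ drop out of a single linear equation rather than separate arguments. Your Step~3 case split (three weight values versus two, differing by $2$ or by $1$) then mirrors the paper's split on $|\{a_i\}|$ and on $k_1\in\{1,2\}$, and I verified the counts: $p\geq 2$ or $r\geq 2$ in the three-value case produces four distinct values of $2s+t$ on four comparable vertices (needing only $n\geq w+2$), and in the difference-$1$ case the profile $n=q+r=w+2$ is excluded precisely by $n\geq 2w$, so your attribution of where that hypothesis enters is sound. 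One small gloss to make explicit: your list "$q=2$ or $w=2$ with $q=r=n/2$" omits the mirrored profile $r=2$, $q=n-2$ (which yields $-f_2$ via $n=2w$), but this is absorbed by the symmetry $a_i\mapsto 2-a_i$ that you declare at the outset, since after renormalization it swaps $q$ and $r$ and negates $f$; stating that reduction in one sentence would close the only cosmetic gap.
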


\begin{proof}
Let us consider the function $g:J(n,1)\rightarrow \mathbb{R}$ such that $g(x)=\sum_{y\in J(n,w)|y\geq x}{f(y)}$, where by $x\geq y$ we understand that set of coordinate positions of $x$ with ones is a subset of the corresponding set for $y$. The function $g$ is a so-called induced function. 

For the Johnson graph, it is known that if $f$ is a $\lambda_1(n,w)$-eigenfunction of $J(n,w)$ then $\lambda_1(n,1)$-eigenfunction of $J(n,1)$. The set of vertices of $J(n,1)$ is exactly the set of unit vectors $e_i$, $i=1,2,\dots, n$ with one in the coordinate $i$. Let us denote by $a_i$ the value $g(e_i)$. Without loss of generality one may assume that $a_1\geq a_2\geq \dots a_n$. Since $g$ is a $\lambda_1(n,1)$-eigenfunction of $J(n,1)$, the function must be orthogonal to a constant function. Therefore, $\sum_{i}{a_i}=0$. 

It is also known (see, for example \cite{Delsarte}), that for $x=(x_1,x_2,\dots, x_n)\in J(n,w)$, we have $$f(x)=\alpha\sum_{i| x_i=1}{a_i},$$ where $\alpha$ is a non-zero constant.
By the theorem hypothesis, $f$ takes exactly 3 different values, it gives some constraints on the multiset $A=\{a_i | i\in \{1, 2,\dots, n\}\}$. The rest of the proof is based on the analysis of this multiset.

Suppose that there are at least $4$ pairwise distinct elements in $A$, say $b_1>b_2>b_3>b_4$. Let $s_1$, $s_2$, $s_3$ and $s_4$ be coordinate positions such that $g(s_i)=b_i$, $i=1,2,3,4$.  
For $w=2$, we have at least $4$ different values of $f$: $f(e_{s_1}+e_{s_2})$, $f(e_{s_1}+e_{s_3})$, $f(e_{s_1}+e_{s_4})$ and $f(e_{s_3}+e_{s_4})$. In case $w\geq 3$, we take any $y\in J(n,w-1)$ having zeros at positions $s_1$,$s_2$,$s_3$ and $s_4$; it is clear that $|\{f(y+e_{s_i}) | i=1,2,3,4\}|=4$ and we get a contradiction.     
 
Suppose that there are exactly $3$ distinct elements $b_1,b_2,b_3\in A$, $b_1>b_2>b_3$, and $s_1$, $s_2$, $s_3$ be corresponding coordinate positions. If $w=2$ then $f(e_{s_1}+e_{s_2})$, $f(e_{s_1}+e_{s_3})$, $f(e_{s_2}+e_{s_3})$ are distinct values. It is easy to see, that $f(e_{s_1}+e_{s_2})> f(e_{s_1}+e_{s_3})>f(e_{s_2}+e_{s_3})$. Therefore, $f(e_{s_1}+e_{s_2})=-f(e_{s_2}+e_{s_3})=1$ and $f(e_{s_1}+e_{s_3})=0$. So we conclude that $b_1=-b_3$ and $b_2=0$. If the multiset $A$ contains one more element $c$ (with corresponding coordinate $s_4$) which is equal to $b_1$ or $b_3$ then $f$ takes one more value except $\{-1,0,1\}$ ($f(e_{s_1}+e_{s_4})>1$ or $f(e_{s_3}+e_{s_4})<-1$ respectively).

 Consequently, $a_1=b_1$, $a_n=b_3$, $a_2=a_3=\dots =a_{n-1}=0$ and $f$ is equivalent to $f_1$ for $w=2$ from the statement of the theorem. The next case is $w\geq 3$. Suppose that the element $b_1$ has a multiplicity more then $1$ in $A$, so $a_{s_1}=b_1$,  $a_{s_2}=b_2$,  $a_{s_3}=b_3$,  $a_{s_4}=b_1$ for some pairwise distinct integers $s_1,s_2,s_3,s_4$. Take some vector $\bar{y}$ from $J(n,w-2)$ with zeros in coordinate positions corresponding to $b_i$, $i=1,2,3,4$. It is easy to check that $|\{f(y+e_{s_1}+e_{s_4}),f(y+e_{s_1}+e_{s_2}),f(y+e_{s_1}+e_{s_3}),f(y+e_{s_2}+e_{s_3}) \}|=4$, and it contradicts to the fact that $f$ takes only $3$ distinct values.  
 
 Providing similar arguments one can show that a multiplicity of $b_3$ in $A$ also equals $1$. So we conclude, that multiplicities of $b_1$,$b_2$ and $b_3$ in $A$ are $1$, $n-2$ and $1$ respectively. In particular, consider the following $4$ values of $f$: $\alpha(b_1+(w-1)b_2)$, $\alpha(b_3+(w-1)b_2)$,
 $\alpha(wb_2)$, $\alpha(b_3+(w-2)b_2+b_1)$. Clearly, the first three of them are pairwise distinct and the fourth can not be equal to the first and the third. The only possible case is that $wb_2=b_3+(w-2)b_2+b_1$, so $b_1=-b_3$ and $b_2=0$. It means, that $f$ is equivalent to $f_1$ for $w\geq 3$.
 
The last case is that there are exactly $2$ distinct elements in $A$. Let $A$ contain $k_1$ elements $a_1$ and $k_2$ elements $a_2$, $k_1+k_2=n$. By orthogonality to a constant function we know $k_1a_1+k_2a_2=0$ and $a_1>0$, $a_2<0$. Without loss of generality we may consider the case $k_1<k_2$ (otherwise, we provide our arguments for a function $-f$).

Let $w$ be equal to $2$. Then $\frac{1}{\alpha}f$ takes $3$ distinct values: $2a_1$, $(a_1+a_2)$ and $2a_2$, and one of them must be $0$. Obviously, $a_1=-a_2$, $k_1=k_2=\frac{n}{2}$ and we found a function $f$ which is equal to $f_4$ from the theorems statement. 

So, in the rest of the proof we have $w\geq 3$. If $k_1\geq 3$ the function $\frac{1}{\alpha}f$ takes at least $4$ distinct values: $wa_2$, $(a_1+(w-1)a_2)$, $(2a_1+(w-2)a_2)$, $(3a_1+(w-3)a_2)$. It means, that we have only to possible cases: $k_1=1$ and $k_2=2$. In the first case, $\frac{1}{\alpha}f$ takes exactly two distinct values:  $wa_2$, $(a_1+(w-1)a_2)$. These values can not be equal to $0$, so they are $-1$ and $1$ respectively. It gives us $a_1=-(2w-1)a_2$ and $n=2w$ by the constant function. As one can see, $f$ is equal to $f_3$. In the second case,  $\frac{1}{\alpha}f$ takes exactly three distinct values:  $wa_2$, $(a_1+(w-1)a_2)$, $(2a_1+(w-2)a_2)$. Clearly, these values must be equal to $-1$,$0$ and $1$ respectively, which immediately gives us $a_1=-(w-1)a_2$ and $n=2w$ by orthogonality to the constant function. In this case, we build the function $f$ which is equal to $f_2$ and finish the proof.

\end{proof}

In the next Section, based on this theorem we prove that there are no equitable $2$-partitions of $J(n,w)$ with second eigenvalue for $n>2w$.

\section{Johnson graphs $J(n,2w)$, $n>2w$}  

Surprisingly, there are no equitable $2$-partitions in Johnson graphs for $n>2w$, $w>3$ with the second eigenvalue. 
\begin{theorem}\label{T:n>2w}
 There are no equitable $2$-partitions in a Johnson graph $J(n,w)$, $n>2w$, $w>3$, with the quotient matrix $[[w(n-w)-b,b],[2n-2-b,w(n-w)-2n+2+b]]$, $b \in \{n-1,n,\dots, 2n-1\}$. 
 
 \end{theorem}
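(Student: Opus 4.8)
\emph{Setup and lift.} The plan is to encode the partition as a single eigenfunction and then differentiate it. By Proposition~\ref{P:matrix} the quotient matrix is $[[a,b],[c,d]]$ with $c=2n-2-b$ and $a-c=\lambda_2(n,w)$; a short computation shows that $(b,-c)$ is a right eigenvector of this matrix for the eigenvalue $a-c$, so lifting it yields a $\lambda_2(n,w)$-eigenfunction $f$ equal to $b$ on $C_1$ and to $-c$ on $C_2$. Since $b,c>0$, these two values are distinct and differ by $M:=b+c=2n-2>0$; in particular $f$ is non-constant. Each partial difference $f_{j_1,j_2}$ is a difference of two values of $f$, so it takes values only in $\{-M,0,M\}$, and hence $\tfrac1M f_{j_1,j_2}$ is a $\{-1,0,1\}$-valued function.

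\emph{Reducing to $f_1$.} By Lemma~\ref{L:part_diff} every $\tfrac1M f_{j_1,j_2}$ is either identically zero or a $\lambda_1(n-2,w-1)$-eigenfunction of $J(n-2,w-1)$, so Theorem~\ref{T:Derivates} applies with parameters $n'=n-2$, $w'=w-1$. The hypotheses $n>2w$ and $w>3$ give exactly $n'>2w'$ and $w'\geq3$, which rule out $f_2,f_3$ (each requires $n'=2w'$) and $f_4$ (requires $w'=2$). Hence the only surviving possibility is $f_1$, which as a function on the slice is the linear form $y_p-y_q$; since $\tfrac1M f_{j_1,j_2}$ is already $\{-1,0,1\}$-valued, the scaling constant must be $\pm1$, so every nonzero partial difference equals $\pm M\,(y_p-y_q)$ for some pair of coordinates $p,q$.

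\emph{Quadratic model and structure.} As $f$ lies in the second eigenspace, Delsarte theory (\cite{Delsarte}, one level above the induced function on $J(n,1)$ used in the proof of Theorem~\ref{T:Derivates}, i.e. via the function induced on $J(n,2)$) represents $f$ by a symmetric matrix $B=(b_{ij})$ with zero diagonal and zero row sums through $f(x)=\tfrac12\,x^{\top}Bx$ on the slice. A direct expansion gives $f_{j_1,j_2}(y)=\sum_{l}(b_{j_1 l}-b_{j_2 l})\,y_l$, so the previous paragraph says precisely that each row difference $r_{j_1}-r_{j_2}$ of $B$ is supported on at most two coordinates in a balanced way: it equals $\pm M(e_p-e_q)$ when the partial difference is nonzero, and $b_{j_1 j_2}(e_{j_2}-e_{j_1})$ when it vanishes. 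Grouping coordinates by Lemma~\ref{L:zero_part_diff} into the classes $P_1,\dots,P_t$ on which $f$ is symmetric makes $B$ block-constant, $b_{ij}=\beta_{mm'}$ for $i\in P_m$, $j\in P_{m'}$ off the diagonal; and for two distinct classes whose sizes are at least $3$, matching the balanced two-coordinate support against the block structure forces $\beta_{aa}=\beta_{ab}=\beta_{bb}$ together with $\beta_{ac}=\beta_{bc}$ for every third class $c$.

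\emph{Conclusion and main obstacle.} Running these equalities over all pairs of classes collapses every block entry to a single value $\beta$, so $B=\beta(J-I)$; the zero row-sum condition then reads $\beta(n-1)=0$, whence $\beta=0$ and $B=0$, i.e.\ $f\equiv0$, contradicting that $f$ takes two distinct values. I expect the genuine difficulty to lie exactly in the collapsing step when some classes have size $1$ or $2$: such a class can legitimately contribute one of the two support coordinates of an $f_1$, so the clean identities $\beta_{aa}=\beta_{ab}=\beta_{bb}$ need not hold directly and the singleton contributions must be tracked by hand. I would dispose of these cases uniformly by writing $r_k=r_1+g_k$ with $v:=r_1$, deriving $(g_k)_1=v_k$ and $(g_k)_k=-v_k$ from symmetry and $b_{kk}=0$; balancedness of each $g_k$ then pins $g_k=v_k(e_1-e_k)$, after which $b_{jk}=b_{kj}$ forces all $v_k$ $(k\neq1)$ equal and the zero sum of the first row forces them to vanish, giving $B=0$. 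The delicate point---and the place where $n>2w$ and $w\geq4$ are really used, the argument failing exactly at $n=2w$ (where $f_2,f_3$ reappear) and at $w=3$ (where $f_4$ reappears)---is excluding the degenerate possibility $v_k=0$ with $g_k$ supported away from $\{1,k\}$.
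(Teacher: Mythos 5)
Your first two paragraphs coincide with the paper's opening moves: lift the partition to the two-valued $\lambda_2$-eigenfunction $f=b\chi_{C_1}-c\chi_{C_2}$, observe that each partial difference is $\{-M,0,M\}$-valued with $M=b+c=2n-2$, and use Theorem~\ref{T:Derivates} with $n'=n-2>2w'=2(w-1)$, $w'\geq 3$ to conclude that every nonzero partial difference is of type $f_1$, i.e.\ equal to $\pm M(y_p-y_q)$. From there, however, you diverge onto an algebraic route (harmonic quadratic representation $f=\tfrac12 x^{\top}Bx$, row differences of $B$, block-constancy over the classes of Lemma~\ref{L:zero_part_diff}), whereas the paper proceeds combinatorially: it propagates the value-equalities coming from $g=f_{1,2}$ and $f_{3,4}$ to show that \emph{all} partial differences $f_{i_1,i_2}$ with $i_1,i_2\geq 5$ vanish, so $f$ depends only on the first four coordinates; it then counts cross-cell degrees of the vertices $(0,0,0,0,\bar{x}')$ and $(1,1,1,1,\bar{x}')$ (these can only be $2w$ or $4w$, respectively $2(n-w)$ or $4(n-w)$), plays this against $b+c=2n-2$ and $4(n-w)>2n-2$ to pin down $n=3w$ with quotient matrix $[[2w^2-4w,4w],[2w-2,2w^2-2w+2]]$, and kills that residual case by a neighbour count giving $w\leq 2$. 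Your plan contains no analogue of this quantitative stage: after the reduction to $f_1$ you use only the structure of row differences of $B$, never the values $b$, $-c$, the relation $b+c=2n-2$, or any degree counting.

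The genuine gap is in your ``collapsing'' step, and you have mislocated where the difficulty sits. Your clean identities $\beta_{aa}=\beta_{ab}=\beta_{bb}$, $\beta_{ac}=\beta_{bc}$ are only forced when the classes involved have size at least $3$; but the configurations that actually arise have \emph{small} classes at the active coordinates (in the paper's analysis the function $g=f_{1,2}$ of type $f_1$ supported on $\{3,4\}$ makes $1,2,3,4$ lie in classes of size at most $2$, with one large class $\{5,\dots,n\}$ --- this is the paper's near-miss $n=3w$ scenario). For exactly these configurations your proposed uniform repair fails: the assertion that ``balancedness of each $g_k$ pins $g_k=v_k(e_1-e_k)$'' is false whenever $f_{1,k}\not\equiv 0$, since then $g_k=v_k(e_1-e_k)\pm M(e_p-e_q)$ with $p,q\notin\{1,k\}$; and if instead all $f_{1,k}\equiv 0$, then by Lemma~\ref{L:zero_part_diff} all partial differences vanish and $f$ is constant, so your subsequent manipulation of the $v_k$ proves nothing new. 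What you dismiss as ``the degenerate possibility $v_k=0$ with $g_k$ supported away from $\{1,k\}$'' is in fact the generic and only nontrivial case, and your sketch offers no argument for it. The algebraic route is not hopeless --- e.g.\ for the class structure $\{1\},\{2\},\{3\},\{4\},\{5,\dots,n\}$, zero row sums force $b_{il}=b_{lm}=0$ for $l,m\geq 5$, the $4\times 4$ block is forced into the form $\alpha_{12}=\alpha_{34}=p$, $\alpha_{13}=\alpha_{24}=q$, $\alpha_{14}=\alpha_{23}=r$ with $p+q+r=0$, and the $f_1$-constraints demand both $\{p,q,r\}=\{M,-M,0\}$ and all pairwise differences in $\{0,\pm M\}$, which is contradictory --- but a complete proof along your lines would require carrying out such an analysis over \emph{all} admissible class structures with small classes, and this, the actual content of the theorem beyond Theorem~\ref{T:Derivates}, is missing from the proposal.
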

 \begin{proof}
   Suppose that $(C_1,C_2)$ is an equitable $2$-partition with the quotient matrix $[[w(n-w)-b,b],[2n-2-b,w(n-w)-2n+2+b]]$ for some $b \in \{n-1,n,\dots, 2n-1\}$. It easy to check, that the function $f:J(n,w)\rightarrow \mathbb{R}$ defined as $b\chi_{C_1}-c\chi_{C_2}$ is a $\lambda_2(n,w)$-eigenfunction of $J(n,w)$ and takes exactly two values: $b$ and $-c$, where $c=2n-2-b$. Consider a function $g=\frac{f_{i_1,i_2}}{b+c}$ defined on vertices of $J(n-2,w-1)$ for some $i_1$, $i_2$, $1\leq i_1<i_2\leq n$. Clearly, $g:J(n-2,w-1)\rightarrow \{-1,0,1\}$ and $g$ is a $\lambda_1(n-2,w-1)$-eigenfunction of $J(n-2,w-1)$. Since $|C_1|>0$ and $|C_2|>0$ there is a pair $(i_1,i_2)$ that the function $g$ is not the all-zero function. Without loss of generality we make take $i_1=1$ and $i_2=2$.
By Theorem \ref{T:Derivates} we have $$g(\bar{x})= \begin{cases}
1, x_3=1,\,x_4=0\\
-1, x_3=0,\,x_4=1\\
0,&\text{otherwise,} \end{cases}$$ $\bar{x}=(x_3,x_4,\dots,x_n)\in J(n-2,w-1).$ 
Therefore, we have the following equalities
 $$f(1010\bar{z})=f(0101\bar{z})=b,\, \bar{z}\in J(n-4,w-2),$$
 $$f(1001\bar{z})=f(0110\bar{z})=-c,\, \bar{z}\in J(n-4,w-2),$$
 $$f(1000\bar{z})=f(0100\bar{z}),  \bar{z}\in J(n-4,w-1), $$
 $$f(1011\bar{z})=f(0111\bar{z}),  \bar{z}\in J(n-4,w-3). $$
It follows from these equalities that $f_{3,4}(10\bar{z})=b+c$ and $f_{3,4}(01\bar{z})=-(b+c)$ for all $\bar{z}\in J(n-4,w-2)$. By Theorem \ref{T:Derivates} $f_{3,4}$ is also equivalent up to a multiplication by a scalar to $g$ and it gives us the following equalities:
  $$f(0010\bar{z})=f(0001\bar{z}),  \bar{z}\in J(n-4,w-1), $$
  $$f(1110\bar{z})=f(1101\bar{z}),  \bar{z}\in J(n-4,w-3). $$

Our next goal is to show that $f_{i_1,i_2}\equiv 0$ for $i_1\neq i_2$, $i_1,i_2\in \{5,6,\dots, n\}$.
Suppose that $f_{i_1,i_2}\not\equiv 0$ for some $i_1$, $i_2$. Without loss of generality one may take $i_1=5$, $i_2=6$. By Theorem \ref{T:Derivates} and by similar arguments we provided for the function $g$ there are $i_3,i_4\in \{1,2,\dots n\}\setminus \{5,6\}$ such that $f(\bar{z})=b$ for $z\in A_1$ and $f(\bar{z})=-c$ for $z\in A_2$, where $$A_1=\{z\in J(n,w)|z_{i_1}+z_{i_2}=z_{i_3}+z_{i_4}=1 ,\, z_{i_1}+z_{i_3}=0\,\,\, \text{or}\,\,\, 2    \},$$ $$A_2= \{z\in J(n,w) |z_{i_1}+z_{i_2}=z_{i_3}+z_{i_4}=1 ,\, z_{i_1}+z_{i_3}=1\}.$$ 
As we defined above, $i_1,i_2\not\in \{1,2,3,4\}$. 

Let us consider numbers $i_3$ and $i_4$.  
If $i_3,i_4\not\in \{1,2,3,4\}$ then we can take any $z'\in A_1$ such that $(z_1,z_2,z_3,z_4)=(0,1,1,0)$.  It is easy to see that $f(z')=b$, but we know that $f(z')=-c$, so we get a contradiction.    

The next case is $i_4\not\in \{1,2,3,4\}$, $i_3\in  \{1,2,3,4\}$. Without loss of generality we take $i_3\in \{1,2\}$. Now we choose $z\in A_1$ such that $z_1+z_2=1$, $z_{3}=1$, $z_{4}=0$ and a vertex $z'$ obtained from $z$ by changing coordinate positions $3$ and $4$. Clearly, $z'\in A_1$, so $f(z')=b$. However, $f(z')\neq f(z)$, so this case is not possible. 

The last case is  $i_3,i_4 \in \{1,2,3,4\}$. Let $i_3,i_4\in \{1,2\}$ (the case $i_3,i_4\in \{3,4\}$ may be considered providing similar arguments). Then we take some $z\in A_1$ such that $z_3+z_4=1$. Again we build $z'$ by changing the third and the fourth coordinate positions and it still must be an element of $A_1$, so $f(z')=b$, so we have a contradiction. If $i_3\in \{1,2\}$, $i_4\in \{3,4\}$ we take some $z\in A_1$ such that $z_1+z_2=1$ and $z_3+z_4=1$. We build $z'$ by changing coordinate positions $i_1$ and $i_4$. It is known that $z' \in A_2$, so $f(z')=-c$ but we know that $f(z)=f(z')$. 

Finally, we proved that $f_{i_1,i_2}\equiv 0$ for $i_1\neq i_2$, $i_1,i_2\in \{5,6,\dots, n\}$. In other words, the value $f(z)$ depends only on $z_1$, $z_2$, $z_3$ and $z_4$ and does not depend on the distribution of ones in $\{5,6,\dots, n\}$.     
 
Note that in the arguments above we always took vectors $z$ and $z'$ containing not more than $4$ ones, so the condition $w>3$ guarantees us correctness of these steps. 

Let us consider the vertex $\bar{x}=(0,0,0,0,\bar{x'})$ for some $\bar{x'}\in J(n-4,w)$. By the arguments provided for all vertices $\bar{y}\in J(n,w)$ having zeros in the first four coordinate positions, we have $f(\bar{x})=f(\bar{y})$. In other words, all these vertices are elements of one cell of partitions. Similarly, all vertices of the form $(a_1,a_2,0,0,\bar{z'})$, $(a_1+a_2)=1$ and $\bar{z'}\in J(n-4,w-1)$, belong to one cell and all vertices of the form $(0,0,a_1,a_2,\bar{z'})$, $(a_1+a_2)=1$ and $\bar{z'}\in J(n-4,w-1)$, also  belong to one cell. Hence, the number o neighbours of $x$ from cell which does not contain $x$ may take only values $2w$ or $4w$. By similar arguments for the vertex $z=(1,1,1,1,\bar{x'})$ for some $\bar{x'}\in J(n-4,w-4)$
we conclude that the vertex $z$ may have only $2(n-w)$ or $4(n-w)$ neighbours from the cell not containing $z$.

 By Proposition \ref{P:matrix} we know that $b+c=2n-2$. Since $4(n-w)>2n-2$ for $n>2w$, we conclude that $b=2(n-w)$ and $b=2w-2$, or $c=2(n-w)$ and $b=2w-2$. 
 
 By our agreement $b\geq c$, so the only possible quotient matrix of our partition is $[[w(n-w)-2(n-w),2(n-w)],[2w-2,w(n-w)-(2w-2)]]$. Let us turn on the vertex $x$ again. If $x\in C_2$ then $2w-2=2w$ or $2w-2=4w$ and we get a contradiction. Hence, $x\in C_1$ and $2(n-w)=2w$ or $2(n-w)=4w$. 
 
 The first case leads us to $n=2w$ but we have $n>2w$, so finally we have $n=3w$ and the quotient matrix $[[2w^2-4w,4w],[2w-2,2w^2-2w+2]]$. Since $x\in C_1$, we have $B=\{y=(y_1,y_2,\dots, y_n)\in J(n,w)| y_1+y_2+y_3+y_4=1\}\subseteq C_2$. Take some vertex from $B$, according to the quotient matrix, this vertex has exactly $2w-2$ neighbours from $C_1$. By simple counting it has $2w-3$ neighbours in $\{y=(y_1,y_2,\dots, y_n)\in J(n,w)| y_1+y_2+y_3+y_4=0\}\cap C_1$ and $w-1$ neighbours in $\{y=(y_1,y_2,\dots, y_n)\in J(n,w)| y_1+y_2=y_3+y_4=1\}\cap C_1$. Consequently, by simplifying $(2w-3)+(w-1)\leq 2w-2$ we obtain $w\leq 2$ and get a contradiction.               

 \end{proof}
 
 The proof of the Theorem \ref{T:n>2w} is based on the fact that any partial difference $f_{i,j}$ of the function $f$ is equal to the all-zero function or to $f_1$ from the Theorem \ref{T:Derivates}. In the case $n=2w$ there are other variants for partial differences $f_{i,j}$ and one requires other approaches to gain the characterization of equitable $2$-partitions with $b+c=4w-2$.

\section{Johnson graphs J(2w,w)}

Before we start working with partial differences for $n=2w$, let us consider some previously known and new constructions of equitable $2$-partitions of $J(2w,w)$.

\begin{construction}\label{C:w}
Let $C=(C_1,C_2)$ be a partition of the set of vertices of $J(2w,w)$, $w\geq 3$, defined by the following rule:
$$C_1=\{(x_1,x_2,x_3,x_4,x_5\dots,x_n)\in J(2w,w)| (x_1,x_2,x_3,x_4,x_5)\in B\},
C_2=J(2w,w)\setminus C_1,$$

where $B= \{(1,0,0,0,0),(1,1,0,0,0),(1,0,1,0,0),(0,0,0,1,1),\\(0,1,1,1,1),(0,0,1,1,1),(0,1,0,1,1),(1,1,1,0,0)\}$.   Then $C=(C_1,C_2)$ is equitable with the quotient matrix $[[w^2-3w+2,3w-2],[w,w^2-w]]$.

\end{construction}

\begin{construction}\label{C:2w_easy}
Let $C=(C_1,C_2)$ be a partition of the set of vertices of $J(2w,w)$, $w\geq 3$, defined by the following rule:
$$C_1=\{(x_1,x_2,\dots,x_n)\in J(2w,w)|x_1+x_2=0 \text{ or } 2\}, C_2=J(2w,w)\setminus C_1.$$ Then $C=(C_1,C_2)$ is equitable with the quotient matrix $[[w^2-2w,2w],[2w-2,w^2-2w+2]]$.

\end{construction}

\begin{construction}\label{C:2w_hard}
Let $C=(C_1,C_2)$ be a partition of the set of vertices of $J(2w,w)$, $w\geq 5$, defined by the following rule:
$$C_1=\{(x_1,x_2,x_3,x_4,x_5,\dots,x_n)\in J(2w,w)| (x_1,x_2,x_3,x_4,x_5) \in B \}, C_2=J(2w,w)\setminus C_1,$$
where $B=\{(0,0,0,0,0),(0,0,1,0,0),(0,0,0,1,0),(0,0,0,0,1), (1,0,1,0,0),\\(0,1,0,1,0),(0,0,1,0,1),(0,0,0,1,1), (1,1,1,1,1),(1,1,0,1,1),(1,1,1,0,1),\\(1,1,1,1,0), (0,1,0,1,1),(1,0,1,0,1),(1,1,0,1,0),(1,1,1,0,0) \}$. Then $C=(C_1,C_2)$ is equitable with the quotient matrix $[[w^2-2w,2w],[2w-2,w^2-2w+2]]$.

\end{construction}

\begin{construction}\label{C:3w}
Let $C=(C_1,C_2)$ be a partition of the set of vertices of $J(2w,w)$, $w\geq 3$, defined by the following rule:
$$C_1=\{(x_1,x_2,x_3,\dots,x_n)\in J(2w,w)| (x_1,x_2,x_3) \in \{(0,0,0),(1,1,1)\} \}, C_2=J(2w,w)\setminus C_1.$$ Then $C=(C_1,C_2)$ is equitable with the quotient matrix $[[w^2-3w,3w],[w-2,w^2-2+2]]$.

\end{construction}

Let us note, that the proof of correctness for the four construction listed above is simple and straightforward. One just need to count the number of neighbours from both cells for vertices of different "types" - starting subvectors of a small length not greater than $5$. Moreover, in fact one need to check only a half of "types" because of Lemma \ref{L:antipodality}.
 
Constructions \ref{C:2w_easy} and \ref{C:3w} were known before. As far as the author knows, Constructions \ref{C:w} and \ref{C:2w_hard} are new.

Now we are going to characterize all equitable $2$-partitions of $J(2w,w)$ with second eigenvalue for $w\geq 7$.  
As we know, for $n=2w$ the quotient matrix of an equitable $2$-partition $(C_1,C_2)$ with second eigenvalue is equal to $[[w^2-b,b],[4w-2-b,w^2-4w+b+2]]$ for some $b\in \{2w-1,2w,\dots 4w-3\}$. 

\begin{proposition}\label{Derivatves:n=2w}
 Let $C=(C_1,C_2)$ be an equitable $2$-partition in a Johnson graph $J(2w,w)$, $w\geq 4$, with the quotient matrix $[[w^2-b,b],[4w-2-b,w^2-4w+2+b]]$, $b\in \{2w-1,2w,\dots 4w-3\}$. Let some partial difference of a function $g=\frac{b\chi_{C_1}-c\chi_{C_2}}{b+c}$, where $c=4w-2-b$, be equivalent to the function $f_3$ from Theorem \ref{T:Derivates}. Then $b=2w$ and $C$ is equivalent to the partition from Construction \ref{C:2w_easy}. 
 \end{proposition}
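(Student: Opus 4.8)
The plan is to use Lemma \ref{L:part_diff}, Theorem \ref{T:Derivates} and Lemma \ref{L:antipodality} to reduce the partition to a function of a bounded number of coordinates, and then to read off $b$ from the equitable condition. Since $g$ takes only the two values $\tfrac{b}{b+c}$ and $\tfrac{-c}{b+c}$, whose difference is $1$, every partial difference of $g$ coincides with the corresponding partial difference of $\chi_{C_1}$ and takes values in $\{-1,0,1\}$. By hypothesis some $g_{i_1,i_2}$ equals $f_3$, and after relabelling I may assume $i_1=1$, $i_2=2$ with distinguished coordinate $3$, so that $\chi_{C_1}(1,0,\bar y)-\chi_{C_1}(0,1,\bar y)$ is $+1$ when $y_3=1$ and $-1$ when $y_3=0$, for every $\bar y$. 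Reading off the two admissible values of each term shows that, for all configurations of the remaining coordinates, $(1,0,1,\ast),(0,1,0,\ast)\in C_1$ and $(1,0,0,\ast),(0,1,1,\ast)\in C_2$. Thus on $\{x:x_1\neq x_2\}$ membership obeys the rule $x\in C_1\iff x_1=x_3$, with no dependence on the other coordinates.

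Next I would determine membership on $\{x:x_1=x_2\}$. For $j,k\ge 4$ the function $g_{j,k}$ vanishes on every vertex with $x_1\neq x_2$ (its two terms then have the same first three coordinates, hence lie in one cell), so by Theorem \ref{T:Derivates}, with $w-1\ge 3$ excluding $f_4$, each $g_{j,k}$ is $0$ or $\pm f_2$ with active pair $\{1,2\}$. The $f_2$ option I expect to discard by a potential argument: a nonzero $\pm f_2$ would force $\chi_{C_1}$, on a fixed weight layer of the coordinates $\{4,\dots,2w\}$ with $x_1=x_2=0$, to have a constant nonzero difference under every admissible swap, i.e. to be the restriction of a nonconstant affine function that is nonetheless $\{0,1\}$-valued on the whole layer, which is impossible. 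Hence $g_{j,k}\equiv 0$ for all $j,k\ge 4$, and by Lemma \ref{L:zero_part_diff} together with connectivity $\chi_{C_1}$ is invariant under all swaps inside $\{4,\dots,2w\}$, so membership depends only on $(x_1,x_2,x_3)$. Lemma \ref{L:antipodality} then pairs the remaining patterns, leaving two unknowns $p:=\chi_{C_1}(0,0,0)=\chi_{C_1}(1,1,1)$ and $q:=\chi_{C_1}(0,0,1)=\chi_{C_1}(1,1,0)$.

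With membership a function of $(x_1,x_2,x_3)$ only, I would impose the equitable condition pattern by pattern. The neighbours of a given vertex split into the pattern-preserving swaps inside $\{4,\dots,2w\}$ and the finitely many swaps touching coordinates $1,2,3$; counting the latter in terms of $p,q$ and the rest-weight, and equating the count of opposite-cell neighbours to $b$ on $C_1$ and to $c=4w-2-b$ on $C_2$, produces a small system. The completion $p=1,q=0$ (and its mirror $p=0,q=1$) reproduces $x\in C_1\iff x_1=x_3$, which is Construction \ref{C:2w_easy}, and gives $b=2w$.

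The main obstacle is that the local counting is not by itself decisive: the two ``diagonal'' completions $p=q=0$ and $p=q=1$ also satisfy the equitable equations, with $b=3w-2$, and they still carry a partial difference equal to $f_3$, so they look indistinguishable from the target at this level (they are of Construction \ref{C:w} type). The crux of the proof is therefore the exclusion of these completions, and this is the step I would scrutinise most carefully: it is where the hypothesis must be used in full strength, presumably by combining the forced value $g_{1,2}=f_3$ with the constraints coming from the remaining partial differences and from antipodality, rather than from neighbour counts alone. Throughout, the assumption $w\ge 4$ is essential, both to rule out $f_4$ in Theorem \ref{T:Derivates} and to keep all the neighbour counts within the admissible range.
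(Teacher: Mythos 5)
Your opening step (recovering membership on $\{x:x_1\neq x_2\}$ from $g_{1,2}=f_3$, so that $x\in C_1\iff x_1=x_3$ there) matches the paper, and your observation that every $g_{j,k}$ with $j,k\geq 4$ vanishes on $\{x_1\neq x_2\}$ and is therefore $0$ or $\pm f_2$ with active pair $\{1,2\}$ is also correct. The genuine gap is the next step: the claim that the $f_2$ option can be discarded, i.e.\ that $g_{j,k}\equiv 0$ for all $j,k\geq 4$, is \emph{false}, and your ``potential argument'' cannot be repaired. A nonzero $g_{j,k}=\pm f_2$ constrains only the single swap pair $(j,k)$; a $\{0,1\}$-valued function that on the layer $x_1=x_2=0$ depends on $x_j$ alone already has constant difference $\pm 1$ under that one swap, and nothing forces \emph{every} admissible swap inside $\{4,\dots,2w\}$ to carry a nonzero difference, so no ``nonconstant affine function'' contradiction arises. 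Concretely, the partition defined by $x\in C_1\iff$ ($x_1\neq x_2$ and $x_3=x_1$) or ($x_1=x_2$ and $x_4=x_1$) is equitable with $b=2w$, has $g_{1,2}=f_3$ with distinguished coordinate $3$, and has $g_{4,j}$ equivalent to $\pm f_2$ for every $j\geq 5$; its membership genuinely depends on four coordinates, so your reduction to $(x_1,x_2,x_3)$ and the ensuing $(p,q)$ bookkeeping collapse. This is exactly the second case reached by the paper, whose route on $\{x_1=x_2\}$ is entirely different from yours: every vertex of $A_{1,1}=\{x:x_1=x_2=1\}$ has exactly $w$ neighbours from each cell outside $A_{1,1}$, so $(C_1\cap A_{1,1},C_2\cap A_{1,1})$ is an equitable partition of $J(2w-2,w-2)$ with eigenvalue $\lambda_1(2w-2,w-2)$; Meyerowitz's classification then says membership on $A_{1,1}$ is governed by a single coordinate $j$, the induced degrees force $b=2w$, Lemma \ref{L:antipodality} transfers the structure to $A_{0,0}$, and the cases $j=3$ and $j\neq 3$ are finished by direct verification. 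This induced-partition idea is the ingredient your attempt is missing. (As an aside, the four-coordinate partition above is in fact equivalent to Construction \ref{C:2w_hard} — whose defining $5$-tuples do not actually depend on $x_5$ — so even the paper's concluding identification in its second case deserves scrutiny; but in any event it is a genuine outcome your argument would wrongly exclude.)

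Your honestly flagged ``main obstacle'' is also not a removable blemish but a real phenomenon: the diagonal completion $C_1=\{x:x_1=x_3\neq x_2\}$ (your $p=q=0$) is a bona fide equitable partition with quotient matrix $[[w^2-3w+2,3w-2],[w,w^2-w]]$, is equivalent to Construction \ref{C:w}, and still satisfies $g_{1,2}=f_3$. In the paper's framework it corresponds to the degenerate alternative in the Meyerowitz step, namely $A_{1,1}$ lying entirely in one cell, which is consistent exactly when $c=w$, i.e.\ $b=3w-2$, and where the classification of nontrivial equitable partitions gives no information. So the exclusion you hoped to extract ``from the hypothesis in full strength'' is not available by partial differences, antipodality, or neighbour counts of the kind you use. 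In summary: step 2 of your proposal is refuted by an explicit partition, and step 3 is an acknowledged hole that your framework cannot close; the proposal therefore does not establish the proposition.
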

  
\begin{proof}
Let some partial difference of $g$ be equivalent to the function $f_3(x)$ from Theorem \ref{T:Derivates}. Without loss of generality we may assume that $$g_{1,2}(x_3,x_4,\dots,x_n)=   \begin{cases}
1, x_3=1,\\
-1, x_3=0,\\
0,&\text{otherwise,}  \end{cases}  $$ $x=(x_3,x_4,\dots, x_n)\in J(n-2,w-1)$.

By definition of the function $g$ this equality allows us to reconstruct some values of $f$: $f(1,0,1,\bar{y})=b$ and $f(0,1,1,\bar{y})=-c$ for all $\bar{y}\in J(2w-3,w-2)$, 
$f(1,0,0,\bar{y})=-c$ and $f(0,1,0,\bar{y})=b$ for all $\bar{y}\in J(2w-3,w-1)$.

Now consider sets $A_{i,j}=\{(z_1,z_2,\dots, z_n)\in J(2w,w)|z_1=i,\, z_2=j\}$, $i,j\in\{0,1\}$. It is easy to check that an arbitrary vertex from $A_{1,1}$ has exactly $w$ neighbours in $(A_{1,0}\cup A_{0,1})\cap C_1$ and $w$ neighbours in $(A_{1,0}\cup A_{0,1})\cap C_2$. Therefore the partition $C'=(C_1\cap A_{1,1},C_2\cap A_{1,1})$ of Johnson graph $J(n-2,w-2)$ with the set of vertices $A_{1,1}$ is equitable with the quotient matrix $[[w^2-w-b,b-w],[3w-2-b, w^2-5w+2+b]]$. By direct calculation one may see that this matrix has the eigenvalue $\lambda_1(2w-2,w-2)$. By Meyerowitz classification \cite{Meyer} this partition has simple structure. There is one coordinate position $j\in\{3,4,\dots 2w\}$ such that all vectors having $0$ in $j$-th coordinate are elements of $C_1'$ and all vectors having $1$ are elements of $C_2'$.

So we know the sets $A_{1,1}\cap C_1$ and $A_{1,1}\cap C_2$. By Lemma \ref{L:antipodality} we also know $A_{0,0}\cap C_1$ and $A_{0,0}\cap C_2$.

 As we see, there is one special coordinate position in the Meyerowitz construction. Therefore, we have two different cases.
 \begin{enumerate}

 \item This position coincides with $x_3$. Clearly, we have $C'_1=A_{1,1}\cap C_1=\{(1,1,1,\bar{y})|y\in J(2w-3,w-3)\}$ and $C'_2=A_{1,1}\cap C_2=\{(1,1,0,\bar{y})|y\in J(2w-3,w-2)\}$. By Lemma \ref{L:antipodality} we can find the whole cells: $$C_1=\{(x_1,x_2,x_3,\dots x_n)\in J(2w,w)| (x_1,x_2,x_3)\in \{(1,0,1),(0,1,0),(1,1,1),(0,0,0)\}\},$$ 
  $$C_2=\{(x_1,x_2,x_3,\dots x_n)\in J(2w,w)| (x_1,x_2,x_3)\in \{(1,0,0),(0,1,1),(1,1,0),(0,0,1)\}\}$$

 Now we need to check that the partition with these cells is equitable with a quotient matrix $[[w^2-2w,2w],[2w-2, w^2-2w+2]]$. By our arguments we do not need to do it for vertices with $(x_1,x_2)\in \{(1,1),(0,0)\}$.
 Consider for example some $x=(1,0,1,\bar{x'})\in C_1$. By simple counting we find that $x$ has $w-1$, $1$, $1$ and $w-1$ neighbours in $C_2$ having values in the first three coordinates $(1,0,0)$,$(0,1,1)$,$(1,1,0)$ and $(0,0,1)$ respectively. For remaining $3$ types of vertices in $C_1$ and $C_2$ the counting is similar.  
 We see that $(x_1,x_2,x_3,\dots x_{2w})\in C_1$ if and only if $x_1+x_2$ equal $0$ or $2$, so the partition is equivalent to Construction \ref{C:2w_easy}
 
 \item This position does not coincide with $x_3$. Without loss of generality we take it as $x_4$. Here we have 
  $C'_1=A_{1,1}\cap C_1=\{(1,1,0,1,\bar{y})|y\in J(2w-4,w-3)\}\cup \{(1,1,1,1,\bar{y})|y\in J(2w-4,w-4)\} $ and $C'_2=A_{1,1}\cap C_2=\{(1,1,0,0,\bar{y})|y\in J(2w-4,w-2)\}\cup \{(1,1,1,0,\bar{y})|y\in J(2w-4,w-3)\} $. By Lemma \ref{L:antipodality} we can find the whole cells: 
   $$C_1=\{(x_1,x_2,x_3,x_4,\dots x_n)\in J(2w,w)| (x_1,x_2,x_3,x_4)\in B_1\}, $$ where $B_1=\{(1,0,1,0),(1,0,1,1),(0,1,0,1),(0,1,0,0),(1,1,0,1),(1,1,1,1),(0,0,1,0),(0,0,0,0)\},$
    $$C_2=\{(x_1,x_2,x_3,x_4,\dots x_n)\in J(2w,w)| (x_1,x_2,x_3,x_4)\in B_2\}, $$ where $B_2=\{(1,0,0,0),(1,0,0,1),(0,1,1,1),(0,1,1,0),(1,1,0,0),(1,1,1,0),(0,0,1,1),(0,0,0,1)\}.$
  
  The verification that $C=(C_1,C_2)$ is equitable with the quotient matrix $[[w^2-2w,2w],[2w-2, w^2-2w+2]]$ is also simple and straightforward (again we do need to check it for vectors having two zeros or two ones in the first two coordinate positions).
  
  Again we see that $(x_1,x_2,x_3,x_4,\dots x_{2w})\in C_1$ if and only if $x_2+x_4$ equals $0$ or $2$, so the partition is equivalent to Construction \ref{C:2w_easy}. 
 
\end{enumerate}   
\end{proof} 
 
Here we will define some notions and statements that will be useful in our further arguments. 
 
\begin{lemma}\label{L:bc_differences}
Let $C=(C_1,C_2)$ be an equitable partition of $J(n,w)$ with the quotient matrix $[[a,b],[c,d]]$.
Then $$\frac{bc}{b+c}{n \choose w}=\sum_{i,i|1\leq i\leq j\leq n}{|S(f_{i,j})|},$$ where $S(f_{i,j})$ is a support for $f_{i,j}$.
\end{lemma}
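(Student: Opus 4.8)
\emph{Plan.} The plan is to reinterpret each summand $|S(f_{i,j})|$ as a count of the edges of $J(n,w)$ that cross the partition in a fixed ``direction'' $\{i,j\}$, and then to add these counts up by a double-counting argument. Throughout I take $f=b\chi_{C_1}-c\chi_{C_2}$, the $(a-c)$-eigenfunction associated with the partition (recall that $(b,-c)$ is an eigenvector of the quotient matrix $[[a,b],[c,d]]$ for the eigenvalue $a-c$, so that $f$ is the corresponding lifted eigenfunction); any function that separates the two cells by distinct values would give the same supports. I read the index set as the set of pairs $i<j$, since $f_{i,j}$ is only defined for $i<j$.

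First I would record the elementary fact underlying the whole argument. Fix $i<j$ and a vertex $y\in J(n-2,w-1)$, and let $x,x'\in J(n,w)$ be the two vertices obtained from $y$ by inserting the patterns $(1,0)$ and $(0,1)$ into positions $i,j$. Then $x$ and $x'$ agree outside $\{i,j\}$ and differ exactly there, so they are adjacent in $J(n,w)$, and by the definition of a partial difference $f_{i,j}(y)=f(x)-f(x')$. Since $f$ takes only the two values $b$ and $-c$ with $b\ne -c$ (both being positive), we get $f_{i,j}(y)\ne 0$ if and only if $x$ and $x'$ lie in different cells. Hence $|S(f_{i,j})|$ equals the number of edges of $J(n,w)$ joining $C_1$ to $C_2$ whose endpoints differ precisely in coordinates $i$ and $j$.

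Next I would sum over all pairs. Every edge of $J(n,w)$ joins two vertices differing in exactly two coordinate positions, so it is associated with a unique unordered pair $\{i,j\}$ and is counted by exactly one term, namely $|S(f_{i,j})|$ for that pair (via the vertex $y$ obtained by deleting those two coordinates). Consequently $\sum_{i<j}|S(f_{i,j})|$ equals the total number of edges of $J(n,w)$ between $C_1$ and $C_2$. Finally I would evaluate that total from the equitable structure: each vertex of $C_1$ has exactly $b$ neighbours in $C_2$, so the number of crossing edges is $b|C_1|$, and counting from the other side it is $c|C_2|$. From $b|C_1|=c|C_2|$ together with $|C_1|+|C_2|=\binom{n}{w}$ one solves $|C_1|=\frac{c}{b+c}\binom{n}{w}$, whence the number of crossing edges is $b|C_1|=\frac{bc}{b+c}\binom{n}{w}$, as required.

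There is no serious obstacle in this argument; it is a clean double count. The only point demanding care is the bookkeeping of the middle step: verifying that the correspondence between the elements of $S(f_{i,j})$ and the direction-$\{i,j\}$ crossing edges is a genuine bijection, so that across the entire sum each crossing edge is counted once and exactly once.
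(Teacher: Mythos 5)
Your proof is correct and takes essentially the same approach as the paper, which disposes of the lemma in two sentences by the very same double count: the left-hand side is the number of edges joining $C_1$ to $C_2$, and each such edge appears exactly once in the sum on the right via the unique pair $\{i,j\}$ of coordinates in which its endpoints differ. Your version merely makes explicit what the paper leaves implicit, namely the choice $f=b\chi_{C_1}-c\chi_{C_2}$, the bijection between $S(f_{i,j})$ and the crossing edges in direction $\{i,j\}$, and the computation $b|C_1|=\frac{bc}{b+c}\binom{n}{w}$ from $b|C_1|=c|C_2|$.
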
 
\begin{proof}
Left side of the equality is just a number of edges connecting vertices from different cells. Since every edge of the graph appears exactly once in the sum from the right side we have the equality. 
\end{proof}

Let $f\in J(n,w)\rightarrow \mathbb{R}$ then by Lemma \ref{L:zero_part_diff} the set of coordinate positions $\{1,2,\dots, n\}$ is partitioned into blocks.

Let us denote by $BD(f)$ the set of these blocks. In other words, $\forall B\in BD(f)\, \forall i,j\in B$ such that $i\neq j$ we have $f_{i,j}\equiv 0$, and  $\forall B,B'\in BD(f)$ such that $B\neq B'$ we have that $\forall i\in B\, \forall j\in B'$ $f_{i,j}\not\equiv 0$. Let $SBD(f)$ be a multiset $\{|B|:B\in BD(f)\}$. Clearly, $\sum_{a\in SBD(f)}{a}=n$ for any function $f$.

We will use the following classical result on the maximum of sum of squares of real numbers.
\begin{lemma}\label{L:squares}
Let $k\in \mathbb{N}$, $x_1,x_2,\dots , x_k,N,s \in \mathbb{R}$ such that $\forall i\in\{1,2,\dots, k\}$ we have $0\leq x_i\leq s<N$, $N\geq s\geq 0$, $ks\geq N$, and $x_1+x_2+\dots x_k=N$. Then $$\sum_{i\in \{1,2,\dots,k\}}{x_i^2}\leq s^2\lfloor\frac{N}{s}\rfloor+(N-s\lfloor \frac{N}{s}\rfloor)^2.$$ Or equivalently, $$\sum_{i,j\in \{1,2,\dots,k\}|i<j}{x_ix_j}\geq \frac{1}{2}s\lfloor\frac{N}{s}\rfloor(2N-s-s\lfloor \frac{N}{s}\rfloor).$$
\end{lemma}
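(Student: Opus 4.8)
The plan is to recognize this as the problem of maximizing the convex function $\sum_{i=1}^{k} x_i^2$ over the compact convex polytope cut out by $0\le x_i\le s$ and $\sum_i x_i = N$. Since a convex function attains its maximum over such a polytope at an extreme point, it suffices to understand those extreme points; I would, however, give a self-contained proof by a smoothing (exchange) argument that avoids invoking general convexity theory, writing $m=\lfloor N/s\rfloor$ throughout.

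First I would show that at any maximizer at most one coordinate lies strictly inside $(0,s)$. Suppose two coordinates satisfy $0 < x_i \le x_j < s$. Replacing the pair $(x_i,x_j)$ by $(x_i-\varepsilon,\,x_j+\varepsilon)$ keeps the total sum equal to $N$ and changes the sum of squares by
$$(x_i-\varepsilon)^2 + (x_j+\varepsilon)^2 - x_i^2 - x_j^2 = 2\varepsilon(x_j - x_i) + 2\varepsilon^2 \ge 0,$$
so increasing $\varepsilon$ until $x_j$ reaches $s$ or $x_i$ reaches $0$ does not decrease $\sum_i x_i^2$ while strictly reducing the number of interior coordinates. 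As there are finitely many coordinates, after finitely many such steps one reaches a feasible point, with sum of squares at least as large, in which at most one coordinate lies in $(0,s)$; write it as $m$ coordinates equal to $s$, one coordinate equal to $r\in[0,s)$, and the rest equal to $0$. The constraint $\sum_i x_i = N$ forces $ms + r = N$ with $r\in[0,s)$, hence $m=\lfloor N/s\rfloor$ and $r = N - s\lfloor N/s\rfloor$, giving
$$\sum_{i=1}^{k} x_i^2 \le m s^2 + r^2 = s^2\Big\lfloor \tfrac{N}{s}\Big\rfloor + \Big(N - s\Big\lfloor \tfrac{N}{s}\Big\rfloor\Big)^2,$$
which is the claimed bound.

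For the equivalent formulation I would use the identity $N^2 = (\sum_i x_i)^2 = \sum_i x_i^2 + 2\sum_{i<j} x_i x_j$, so that the upper bound on $\sum_i x_i^2$ becomes a lower bound on $\sum_{i<j} x_i x_j$; the short algebraic simplification
$$N^2 - s^2 m - (N - sm)^2 = s m\,(2N - s - s m)$$
then yields exactly $\sum_{i<j} x_i x_j \ge \tfrac{1}{2}\, s\lfloor N/s\rfloor(2N - s - s\lfloor N/s\rfloor)$. The one subtle point, and the main thing I would be careful about, is feasibility: the extremal configuration uses $\lfloor N/s\rfloor$ coordinates at value $s$ and possibly one more partial coordinate, so one must confirm there is room for it. This is precisely what the hypothesis $ks\ge N$ guarantees, since it gives $k \ge N/s \ge \lfloor N/s\rfloor$ (and $k \ge \lfloor N/s\rfloor + 1$ when $s \nmid N$); because the smoothing process preserves the number of coordinates, feasibility is never lost, and the degenerate case $s \mid N$ (where $r=0$ and there is no interior coordinate) is covered by the same formula.
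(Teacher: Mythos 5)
Your proof is correct and complete. Note that the paper itself offers no proof of this lemma at all: it is stated as a ``classical result'' and used as a black box in the proof of Theorem 3, so there is nothing to compare against; your write-up actually supplies an argument the paper omits. Your smoothing step is sound: replacing $(x_i,x_j)$ with $0<x_i\le x_j<s$ by $(x_i-\varepsilon,\,x_j+\varepsilon)$ changes the sum of squares by $2\varepsilon(x_j-x_i)+2\varepsilon^2$, which is \emph{strictly} positive for $\varepsilon>0$, so in fact no maximizer (which exists by compactness of the polytope, nonempty precisely because $ks\ge N$) can have two coordinates in $(0,s)$ --- the finite iteration you describe is a valid alternative but not even needed. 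From there the constraint $ms+r=N$ with $r\in[0,s)$ correctly pins down $m=\lfloor N/s\rfloor$ and $r=N-s\lfloor N/s\rfloor$, your algebraic identity $N^2-s^2m-(N-sm)^2=sm(2N-s-sm)$ checks out, and the passage between the two formulations via $N^2=\sum_i x_i^2+2\sum_{i<j}x_ix_j$ is exactly right. You also correctly isolated the only delicate hypothesis, $ks\ge N$, which guarantees both feasibility of the polytope and room for the extremal configuration (with $k\ge\lfloor N/s\rfloor+1$ when $s\nmid N$, since $k$ is an integer exceeding $N/s>\lfloor N/s\rfloor$). One cosmetic remark: the hypotheses force $s>0$ (otherwise $ks\ge N>s=0$ fails), so the divisions by $s$ are legitimate; it would be worth saying this in one line.
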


As one may note, the partitions from Constructions we discussed in some sense depend only on not more than $5$ coordinate position. In other words, if we know entries of a vector in these positions, then we know exactly what cell contains the vector.  
The goal of the next Proposition is to classify all such partitions. 
\begin{proposition}\label{P:5positions}
 Let $C=(C_1,C_2)$ be equitable $2$-partitions in a Johnson graph $J(2w,w)$, $w\geq 5$, with the quotient matrix $[[w^2-b,b],[4w-2-b,w^2-4w+2+b]]$, $b\in \{2w-1,2w,\dots 4w-3\}$, $b+c=4w-2$, and $g=\frac{b\chi_{C_1}-c\chi_{C_2}}{b+c}$. Suppose that $\exists T\in BD(g)$ such that $|T|\geq 2w-5$. Then $C$ is equivalent to one of partitions from Constructions \ref{C:w}, \ref{C:2w_easy}, \ref{C:2w_hard} or \ref{C:3w}.
 \end{proposition}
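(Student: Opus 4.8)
The plan is to exploit the block $T$ to show that the partition is governed by the values of a vertex on a bounded number of coordinates, and then to classify the resulting Boolean function by its equitability equations. First I would record what a block means: $g_{i,j}\equiv 0$ says exactly that $g(x)$ is unchanged when the entries in positions $i$ and $j$ are transposed, i.e. that $g$ (equivalently the partition) is invariant under the transposition $(i\,j)$. The transpositions $(i\,j)$ with $i,j\in T$ generate the symmetric group $S_T$, so the hypothesis $|T|\ge 2w-5$ makes $g$ invariant under all of $S_T$. Put $R=\{1,2,\dots,2w\}\setminus T$, $r=|R|\le 5$. Two vertices that agree on $R$ carry the same weight on $T$, hence are related by an element of $S_T$ and lie in the same cell; thus cell membership is a function of the pattern $x|_R\in\{0,1\}^r$ only. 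For $w\ge 5$ every pattern occurs (the residual weight $w-\mathrm{wt}(x|_R)$ always fits into $T$), so the partition is encoded by the set $W\subseteq\{0,1\}^r$ of patterns lying in $C_1$. A coordinate on which $W$ does not genuinely depend may be absorbed into $T$, so I may assume $W$ depends on all $r$ coordinates.

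Next I would extract two conditions on $W$. Lemma~\ref{L:antipodality} says that complementing all $2w$ coordinates preserves cells; on $R$ this is precisely that $W$ is closed under the complement $p\mapsto\bar p$ of $\{0,1\}^r$. Equitability is obtained by counting, for a vertex with pattern $p$ of weight $k$, its neighbours in the opposite cell. A neighbour moves one $1$ to one $0$; grouping the moves according to whether the two affected positions lie in $R$ or in $T$ gives, for $p\in C_1$,
\[
N(p)=(w-r+k)\,d^{\downarrow}(p)+(w-k)\,d^{\uparrow}(p)+e(p),
\]
where $d^{\downarrow}(p)$ (resp. $d^{\uparrow}(p)$) is the number of cube-neighbours of $p$ obtained by deleting (resp. adding) a single one and lying in $C_2$, and $e(p)$ counts the pattern-altering moves inside $R$ landing in $C_2$. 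Equitability requires $N(p)=b$ for every $p\in C_1$, and the symmetric count to equal $c$ for every $p\in C_2$.

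The displayed identity is affine in $w$ with leading coefficient $d^{\downarrow}(p)+d^{\uparrow}(p)$, the number of Hamming-neighbours of $p$ in the opposite cell. Since $b,c\le 4w-3$ while $d^{\downarrow},d^{\uparrow},e$ are bounded by $r\le 5$, this forces the opposite-cell cube-degree of each pattern into a narrow range; morally, $W$ is an equitable $2$-partition of the Boolean cube $Q_r$ whose off-diagonal entries sum to $4$ (the eigenvalue $r-4$ coming from $b+c=4w-2$). I would therefore list the antipodal, genuinely $r$-dimensional equitable partitions of $Q_r$ for $r\le 5$, and then impose the constant term of $N(p)$, an equation of the form $\beta_0=(k-r)d^{\downarrow}(p)-k\,d^{\uparrow}(p)+e(p)$ that must hold with one and the same $\beta_0$ over all $p\in C_1$. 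These constant-term equations discard the cube-partitions that do not lift to $J(2w,w)$ and fix the value of $b$. The survivors are exactly the patterns of Constructions~\ref{C:w}, \ref{C:2w_easy}, \ref{C:2w_hard}, \ref{C:3w}, with $(r,b)=(5,3w-2),(2,2w),(5,2w),(3,3w)$ respectively; conversely each of these is equitable, so the list is complete. (The coincidence $b=2w$ for both Constructions~\ref{C:2w_easy} and~\ref{C:2w_hard} is why one quotient matrix can support two inequivalent partitions.)

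The main obstacle I expect is the case $r=5$: there are $32$ patterns, and even after halving by the antipodal symmetry the space of candidate sets $W$ is large, so the leading-term (cube-equitability) test must be used aggressively to produce a short list before the constant-term equations are checked. A second delicate point is that the affine-in-$w$ separation is genuinely tight only for $w$ not too small, so for the smallest admissible $w$ the relations $N(p)=b$ and $N(p)=c$ must be solved as honest Diophantine equations rather than by comparing $w$-coefficients; confirming that no spurious five-coordinate partition survives there is the crux. By contrast the cases $r\le 3$ are short — $r=1$ forces $W$ constant and hence an empty cell, $r=2$ leaves only Construction~\ref{C:2w_easy}, $r=3$ only Construction~\ref{C:3w} — and $r=4$ admits no genuinely four-dimensional solution.
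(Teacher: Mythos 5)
Your reduction is the same as the paper's: since $g_{i,j}\equiv 0$ for $i,j\in T$ means invariance under the transposition $(i\,j)$, cell membership depends only on the pattern $x|_R$ on the $r\leq 5$ coordinates outside $T$ (every pattern indeed occurs for $w\geq 5$), antipodality (Lemma \ref{L:antipodality}) closes the pattern set $W$ under complementation, and your neighbour count $N(p)=(w-r+k)\,d^{\downarrow}(p)+(w-k)\,d^{\uparrow}(p)+e(p)$ is correct. But the two steps that carry the actual content of the proposition are not done. First, your pruning of quotient matrices rests on comparing coefficients of $w$ (``cube-equitability of $W$ with $\beta+\gamma=4$''), and as you yourself concede, this separation fails for small $w$: the constant terms in $N(p)$ can be of size up to roughly $18$, so for $5\leq w\lesssim 18$ --- a range squarely inside the proposition's hypothesis $w\geq 5$ --- nothing forces the opposite-cell cube-degree to be constant on each cell, and the ``honest Diophantine equations'' you defer to are exactly the crux left unresolved. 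Second, the classification of admissible $W$ for $r=4$ and $r=5$ is asserted (``$r=4$ admits no genuinely four-dimensional solution'', ``the survivors are exactly\dots''), not argued; for $r=5$ there are, even after antipodality, $2^{16}$ candidate sets, and no mechanism in your write-up actually produces the short list.

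The paper closes both holes at once with one observation that your formula already contains but you never exploit: evaluate $N$ at the all-zero pattern. There $k=0$, $d^{\downarrow}=e=0$, so $N(0^r)=w\,d^{\uparrow}(0^r)$ is a multiple of $w$, and it equals $b$ or $c$. Combined with $b+c=4w-2$, $b\geq c$, and $b\leq 4w-3$, this leaves exactly three quotient matrices, $[[w^2-3w+2,3w-2],[w,w^2-w]]$, $[[w^2-2w,2w],[2w-2,w^2-2w+2]]$ and $[[w^2-3w,3w],[w-2,w^2-w+2]]$, uniformly in $w\geq 5$ and with no asymptotics. From there the paper propagates deterministically: the divisibility argument fixes the cell of the weight-$0$ pattern, the exact counts $N(p)=b$ or $c$ at weight-$1$ patterns fix (up to relabelling coordinates) the weight-$2$ patterns, and Lemma \ref{L:antipodality} supplies weights $3$, $4$, $5$; each of the three matrices then yields precisely the pattern sets of Constructions \ref{C:w}, \ref{C:2w_easy}/\ref{C:2w_hard} (the case split you noticed at $b=2w$), and \ref{C:3w}. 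So your claimed list of survivors and the pairs $(r,b)$ are right, but as written the proof has a genuine gap; the cheap fix is to replace your leading-coefficient heuristic by the $k=0$ specialization of your own identity and then actually run the finite weight-by-weight case analysis.
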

\begin{proof}

Without loss of generality we consider the case $6,7,\dots, 2w\in T$ (we do not exclude a case when some of five remaining coordinates belong to $T$). Now we will try to reconstruct the whole partition $(C_1,C_2)$ using this knowledge. Let us denote by $A_{(i_1i_2i_3i_4i_5)}=\{(y_1,y_2,\dots, y_{2w})\in J(2w,w)| y_1=i_1,\dots, y_5=i_5\}$.
Clearly, $f$ does not depend on last $2w-5$ coordinates. In other words, for any fixed $i_1,i_2,\dots, i_5$ we have $A_{(i_1,i_2,i_3,i_4,i_5)}\subseteq C_1$ or  $A_{(i_1,i_2,i_3,i_4,i_5)}\subseteq C_2$.  Consider some vertex $x\in A_{(00000)}$ (we will omit comas in vectors if it does not create any misunderstandings).

 Obviously, this vertex has $5$ groups of size $w$ of neighbours and each of these groups is a subset of $C_1$ or $C_2$. Consequently, one of parameters $b$, $c$ is divisible by $w$. Therefore, there are only three putative quotient matrices: $[[w^2-3w+2,3w-2],[w, w^2-w]]$, $[[w^2-2w,2w],[2w-2, w^2-2w+2]]$, $[[w^2-3w,3w],[w-2, w^2-w+2]]$.

\begin{enumerate} 
\item $[[w^2-3w+2,3w-2],[w, w^2-w]]$

 For $w>2$ $3w-2$ is not divisible by $w$ so $A_{(00000)}\in C_2$. Without loss of generality $A_{(10000)}\subseteq C_1$ and $A_{(01000)},A_{(00100)},A_{(00010)},A_{(00001)}\subseteq C_2$. A vertex from $A_{(10000)}$ has exactly $4+(w-4)$ neighbours from $C_2$, so without loss of generality we can put $A_{(11000)}, A_{(10100)}\subseteq C_2$. Hence, all vertices from $A_{(01000)}$ and $A_{(00100)}$ have exactly $w$ neighbours from $C_1$, but vertices from $A_{(00010)}$ and $A_{(00001)}$ have only $1$ neighbour from $C_1$. The only way not to get a contradiction is to put $A_{(00011)}\subseteq C_1$. 

By Lemma \ref{L:antipodality} $A_{(i_1i_2i_3i_4i_5)}$ and $A_{((1-i_1)(1-i_2)(1-i_3)(1-i_4)(1-i_5))}$ belong to the same cell of the partition.  
So, we realized that $$C_1=\cup_{(i_1i_2i_3i_4i_5)\in B}{A_{(i_1i_2i_3i_4i_5)}},$$ where  $B=\{(10000),(11000),(10100),(00011), (01111),(00111),(01011),(11100)\}$ and $C_2=J(n,w)\setminus C_1$.            
By the arguments we checked that partition is equitable for $5$-tuples containing $0$ and $1$ ones. Lemma \ref{L:antipodality} guarantees us that we have not to check it for $5$ and $4$ ones too. Without loss of generality the cases we should check are $(11000)$,$(10010)$,$(01100)$,$(00011)$, $(01010)$. By straightforward computing one may show that vertices from corresponding sets $A$ have right number of neighbours in $C_1$ and $C_2$. 
It is easy to ensure that the partition we build is exactly the partition from the Construction \ref{C:w}.

\item $[[w^2-2w,2w],[2w-2, w^2-2w+2]]$

For $w>3$ the element $2w$ is divisible by $w$ unlike $2w-2$. Consequently, $A_{(00000)}\in C_1$.  Without loss of generality we can put $A_{(10000)},A_{(01000)}\subseteq C_2$ and $A_{(00100)},A_{(00010)},A_{(00001)}\subseteq C_1$. Our next step is to understand for what vectors from $J(5,2)$ corresponding sets $A$ must be subsets of $C_1$ and $C_2$. Vertices from $A_{(10000)}$ and $A_{(01000)}$ have exactly $3+w-4$ from $C_1$, so each of vectors $(10000)$ and $(01000)$ must be "covered" exactly one time by some vector $v$ from $J(5,2)$ such that $A_v\in C_1$. By similar arguments, every vertex from $A_{(00100)},A_{(00010)},A_{(00001)}\subseteq C_1$ has $2$ neighbours from $C_2$, so each of these three vectors must be covered twice. It is easy to see, that without loss of generality there are two non-equivalent variants to choose vectors from $J(5,2)$ in order to fulfil these requirements.
\begin{enumerate}
\item $A_{(11000)}, A_{(00110)}, A_{(00101)}, A_{(00011)}\subseteq C_1$.
All sets corresponding to remaining vectors containing two $1$s are subsets of $C_2$. Again, Lemma \ref{L:antipodality} allows to reconstruct $C_1$ and $C_2$ immediately. Let $B=\{(00000),(00100),(00010),(00001), (11000),(00110),(00101),(00011), \\ (11111),(11011),(11101),(11110), (00111),(11001),(11010),(11100) \}$, then $$C_1=\cup_{(i_1i_2i_3i_4i_5)\in B}{A_{(i_1i_2i_3i_4i_5)}}$$ and $C_2=J(n,w)\setminus C_1$.
It is easy to see that $(x_1,x_2,x_3,x_4,x_5,\dots,x_{2w})\in C_1$ if and only if $x_1+x_2$ equals $0$ or $2$, so this construction is equivalent to Construction \ref{C:2w_easy}.

\item $A_{(10100)}, A_{(01010)}, A_{(00101)}, A_{(00011)}\subseteq C_1$.
By similar arguments, we have $B=\{(00000),(00100),(00010),(00001), (10100),(01010),(00101),(00011), \\ (11111),(11011),(11101),(11110), (01011),(10101),(11010),(11100) \}$, then $$C_1=\cup_{(i_1i_2i_3i_4i_5)\in B}{A_{(i_1i_2i_3i_4i_5)}}$$ and $C_2=J(n,w)\setminus C_1$.
One may see that this construction is equivalent to Construction \ref{C:2w_hard}.
\end{enumerate}   

\item $[[w^2-3w,3w],[w-2, w^2-w+2]]$

By the same arguments as in previous item we immediately have $A_{(00000)}\in C_1$. Without loss of generality we take $A_{(10000)},A_{(01000)}\subseteq C_1$ and $A_{(00100)},A_{(00010)},A_{(00001)}\subseteq C_2$. Our next step is to understand for what vectors from $J(5,2)$ corresponding sets $A$ must be subsets of $C_1$ and $C_2$.
Every vertex from $A_{(00100)},A_{(00010)},A_{(00001)}$ has $2+(w-4)$ neighbours in $C_1$, so we conclude that $A_{(x_1,x_2,x_3,x_4,x_5)}\subseteq C_2$ if $x_1+x_2+x_3+x_4+x_5=2$ and $x_3+x_4+x_5\geq 1$. Hence, vertices from  $A_{(10000)},A_{(01000)}$ have exactly $3+3(w-1)$ vertices from $C_2$, so  $A_{(11000)}\subseteq C_1$. Lemma \ref{L:antipodality} allows to state that $$C_1=\cup_{(i_1i_2i_3i_4i_5)\in B}{A_{(i_1i_2i_3i_4i_5)}}$$ and $C_2=J(n,w)\setminus C_1$, where $B=\{(00000),(10000),(01000),(11000), (11111),(01111),(10111),(00111)\}$. Clearly, $(x_1,x_2,x_3,x_4,x_5,\dots,x_{2w})\in C_1$ if and only if $x_3+x_4+x_5$ equals $0$ or $3$. Consequently, the partition is equivalent to Construction \ref{C:3w}.

\end{enumerate}

\end{proof}

Proposition \ref{P:5positions} allows us to classify equitable $2$-partitions if there is block in $BD(g)$ of size at least $2w-5$ for corresponding function $g$. The last question we need to answer is whether there are partitions without such a block.

 \begin{theorem}\label{Partitions:n=2w}
     Let $C=(C_1,C_2)$ be an equitable partition of $J(2w,w)$ with the second eigenvalue, $w\geq 7$. Then $C$ is equivalent to one of the partitions from Constructions \ref{C:w}, \ref{C:2w_easy}, \ref{C:2w_hard} and \ref{C:3w}. For $w=4$, $w=5$ and $w=6$ the set of admissible matrices is also covered by matrices from Constructions \ref{C:w}, \ref{C:2w_easy}, \ref{C:2w_hard} and \ref{C:3w}.  
 \end{theorem}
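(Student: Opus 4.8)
The plan is to run the entire argument through the $\lambda_2(2w,w)$-eigenfunction $g=\frac{b\chi_{C_1}-c\chi_{C_2}}{b+c}$ (with $c=4w-2-b$) and its partial differences. By Lemma~\ref{L:part_diff} every $g_{i,j}$ is a $\lambda_1(2w-2,w-1)$-eigenfunction of $J(2w-2,w-1)$ taking values in $\{-1,0,1\}$, or the all-zero function. Since $2w-2=2(w-1)$ and $w\ge 4$ excludes the $w=2$ function $f_4$, Theorem~\ref{T:Derivates} tells me that each nonzero $g_{i,j}$ is equivalent to exactly one of $f_1,f_2,f_3$. This reduces the whole classification to understanding which of these three types occur and how the pairs $(i,j)$ distribute among them.

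First I would dispose of type $f_3$: if any $g_{i,j}$ is equivalent to $f_3$, then Proposition~\ref{Derivatves:n=2w} immediately forces $b=2w$ and $C$ equivalent to Construction~\ref{C:2w_easy}. So from now on I may assume that every nonzero partial difference is of type $f_1$ or $f_2$. The goal then becomes to produce a block $T\in BD(g)$ with $|T|\ge 2w-5$, because Proposition~\ref{P:5positions} converts such a block into one of the four advertised constructions. Thus the theorem reduces to a single statement: \emph{in the no-$f_3$ case the decomposition $BD(g)$ cannot consist entirely of blocks of size at most $2w-6$.}

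To prove this I would count the edges between the cells in two ways. By Lemma~\ref{L:bc_differences}, $\frac{bc}{4w-2}\binom{2w}{w}=\sum_{i<j}|S(g_{i,j})|$, while a direct computation gives the support sizes $|S(f_1)|=2\binom{2w-4}{w-2}$ and $|S(f_2)|=2\binom{2w-4}{w-1}$, so that $|S(f_1)|>|S(f_2)|$ and each nonzero term of the sum lies between these two values. If every block had size at most $s=2w-6$, then, writing the block sizes as $t_1,\dots,t_k$ with $\sum_p t_p=2w$, the number $P=\tfrac12\bigl(4w^2-\sum_p t_p^2\bigr)$ of nonzero partial differences is large: Lemma~\ref{L:squares}, with $\lfloor 2w/(2w-6)\rfloor=1$ for $w\ge 7$, gives $\sum_p t_p^2\le (2w-6)^2+36$ and hence $P\ge 12w-36$. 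Combining $P\ge 12w-36$ with the feasibility bound $bc\le(2w-1)^2$ (from $b+c=4w-2$) confronts the lower estimate $(12w-36)\cdot 2\binom{2w-4}{w-1}$ for the edge sum with the upper estimate $\tfrac{2w-1}{2}\binom{2w}{w}$, and the contradiction I seek is exactly the failure of this pair of inequalities to be simultaneously satisfiable.

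The main obstacle is that this confrontation is tight precisely at the claimed threshold and is sensitive to the $f_1/f_2$ mixture. Bounding every nonzero support crudely from below by the smaller value $|S(f_2)|$ only yields a contradiction for $w\ge 9$; to reach $w\ge 7$ I must show that a sufficient proportion of the nonzero partial differences are of the larger-support type $f_1$ — ideally that type $f_2$ is incompatible with having only small blocks, using the transitivity of Lemma~\ref{L:zero_part_diff} to control how the two types propagate across coordinate positions — so that the count run against $|S(f_1)|=2\binom{2w-4}{w-2}$ applies and closes exactly for $w\ge 7$. I expect this quantitative control of the $f_2$-contribution to be where the real work lies. Finally, for $w\in\{4,5,6\}$ the same inequalities no longer force a large block, so there I would not attempt a full structural classification but only extract the admissible quotient matrices, checking that the integers $b\in\{2w-1,\dots,4w-3\}$ surviving the divisibility and edge-counting constraints are exactly those realized by Constructions~\ref{C:w}, \ref{C:2w_easy}, \ref{C:2w_hard} and \ref{C:3w}.
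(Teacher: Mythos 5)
Your architecture coincides with the paper's: dispose of $f_3$ via Proposition~\ref{Derivatves:n=2w}, reduce everything to producing a block of $BD(g)$ of size at least $2w-5$, and finish with Proposition~\ref{P:5positions}; your count via Lemma~\ref{L:bc_differences} and a single application of Lemma~\ref{L:squares} with $s=2w-6$ (slightly cleaner than the paper's two subcases, and with the correct support sizes --- note $|S(f_2)|=2\binom{2w-4}{w-3}$, the paper's displayed value misses the factor $2$ although its equation~(\ref{E:k1+k2}) includes it) correctly yields the contradiction for $w\ge 9$, and your treatment of $w\in\{4,5,6\}$ matches the paper. The genuine gap is $w=7$ and $w=8$ --- exactly the cases needed to reach the stated threshold $w\ge 7$ --- which you leave as a plan, and the plan is misdirected. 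You propose to close these cases by showing that a sufficient proportion of the nonzero partial differences are of the large-support type $f_1$. But at the tight parameters the counting system is satisfiable with the opposite distribution: for $w=8$, equation~(\ref{E:k1+k2}) reads $13bc=8(7k_1+6k_2)$ with $b+c=30$; taking $b=16$, $c=14$ gives $7k_1+6k_2=364$, which together with $k_1+k_2=60$ (i.e., $SBD(g)=\{10,6\}$) has the nonnegative integer solution $k_1=4$, $k_2=56$. Similarly for $w=7$ with $b=14$, $c=12$ one gets $6k_1+5k_2=264$, satisfied by $k_1=k_2=24$ for $SBD(g)=\{8,6\}$ and by $k_1=19$, $k_2=30$ for $SBD(g)=\{7,7\}$. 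So the configurations that must be excluded are $f_2$-dominated and pass every inequality of the type you propose; no refinement of the support-size count along the lines ``mostly $f_1$'' can succeed.

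The paper eliminates these cases by a different mechanism, which is the actual content of the proof below $w=9$. First, equation~(\ref{E:k1+k2}) forces a divisibility constraint ($8\mid bc$ for $w=8$, $7\mid bc$ for $w=7$), which combined with the lower bound on $k_1+k_2$ pins $(b,c)$ to $(16,14)$, respectively $(14,12)$, and then pins the block structure to $SBD(g)=\{10,6\}$, respectively $\{8,6\}$ or $\{7,7\}$ (all-small-block structures being excluded by Lemma~\ref{L:squares} against the linear equation in $k_1,k_2$). Each surviving structure is then killed by a direct neighbour count: since $g$ is invariant under coordinate transpositions inside a block, a vertex with all zeros on the smaller block has either $0$ or $w\cdot|\mathrm{block}|$ neighbours in the opposite cell ($48$, $42$ or $49$ in the three cases), and none of these equals $b$ or $c$. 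This structural step is absent from your proposal; without it, your argument proves the theorem only for $w\ge 9$.
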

 
 \begin{proof}
   Let $C$ have a quotient matrix $[[w^2-b,b],[4w-2-b,w^2-4w+b+2]]$ for some $b\in \{2w-1,2w,\dots 4w-3\}$. Again let $f=b\chi_{C_1}-c\chi_{C_2}$ and $g=\frac{b\chi_{C_1}-c\chi_{C_2}}{b+c}$. Clearly, $g$ is a $\lambda_1(2w-2,w-1)$-eigenfunction if $J(2w-2,w-1)$.
     
   In Proposition \ref{Derivatves:n=2w} we described all partitions having one of partial differences of $g$ being equivalent to $f_3$ from Theorem \ref{T:Derivates}. Therefore, we conclude that the set $PD(g)=\{g_{i,j}|1\leq i< j \leq n\}$ consists of functions which are equivalent to the all-zero function, $f_1$ or $f_2$ from Theorem \ref{T:Derivates}. Let us denote by $k_0$, $k_1$ and $k_2$ respectively the numbers of such functions in $PD(g)$. Clearly, the size of the support of them is equal to $0$, $2{2w-4 \choose w-2}$ and ${2w-4 \choose w-3}$ respectively. Therefore, by Lemma \ref{L:bc_differences} we have the following system of equations:
        
 $$\begin{cases}
k_0+k_1+k_2={2w \choose 2}\\
\frac{bc}{b+c}{2w \choose w}=k_1{ 2w-4 \choose w-2}+k_2{ 2w-4 \choose w-3}.\end{cases}    $$    
After simplifying the second equation and using $b+c=4w-2$ we get          
        \begin{equation}\label{E:k1+k2}
         \begin{cases}
k_0+k_1+k_2={2w \choose 2}\\
bc(2w-3)=k_1w(w-1)+k_2w(w-2).\end{cases}    
\end{equation}         
 The rest of the proof is based on the analysis of $BD(g)$ and system (\ref{E:k1+k2}).
 Since $(\frac{b+c}{2})^2\geq bc$, (\ref{E:k1+k2}) gives us $(2w-1)^2(2w-3)\geq k_1w(w-1)+k_2w(w-2)\geq w(w-2)(k_1+k_2).$ After simplifying we have that the number of non-zero partial differences of $g$ is not greater than $8w-2$ for $w\geq 4$, $(k_1+k_2)\leq 8w-2$.
 
 Now let us consider the case $w\geq 9$. Suppose that $SBD(g)$ contains an element $t$ such that $6\leq t\leq 2w-6$. Consequently, $k_1+k_2\geq t(2w-t)\geq 6(2w-6)=12w-36$. So $8w-2\geq 12w-36$, but it not possible for $w\geq 9$. Now suppose, that $\forall t\in SBD(g)$ we have $t<6$. By Lemma \ref{L:squares} we have $$8w-2\geq \frac{1}{2}5\lfloor\frac{2w}{5}\rfloor(4w-5-5\lfloor\frac{2w}{5}\rfloor)\geq \frac{(2w-4)(2w-5)}{2}.$$ This inequality leads us to $4w^2-34w+24\leq 0$ which is not true for $w\geq 9$.
 
Therefore, we conclude that $\exists T\in BD(g)$ such that $|T|\geq 2w-5$. Here we can use Proposition \ref{P:5positions} and claim that $C$ is equivalent to one of partitions from Constructions \ref{C:w},\ref{C:2w_easy},\ref{C:2w_hard},\ref{C:3w} with $b=3w-2$, $b=2w$, $b=2w$ and $b=3w$ respectively.
 Now we are going to consider cases $w=8,7$. 
 \begin{enumerate}
 \item $w=8$.
 From (\ref{E:k1+k2}) we have $13bc=56k_1+48k_2$, where $b+c=30$ and $b\geq c$. If $SBD(g)$ contains an element $t\geq 2w-5=11$ then by Proposition \ref{P:5positions} and by the arguments we provided above $C$ is equivalent to one of partitions from Constructions \ref{C:w},\ref{C:2w_easy},\ref{C:2w_hard},\ref{C:3w}. Otherwise, $\forall t\in SBD(g)$ we have $t\leq 10$ and Lemma \ref{L:squares} gives us $k_1+k_2\geq 60$. Therefore, $13bc\geq 48*60$ together with $b+c=30$ and $8|bc$, we have $b=16$ and $c=14$. Then we have $7k_1+6k_2=364$. Since $k_1$ and $k_2$ are nonnegative integers and $k_1+k_2\geq 60$, we conclude that $k_1+k_2=60$. In other words, $SBD(g)=\{10,6\}$. Without loss of generality let first six coordinate positions form this block of size $6$. Consider a vertex of $J(16,8)$ with zeros in these coordinate positions. Obviously this vertex may has only $0$ or $48$ vertices from other cell and we get a contradiction.
  \item $w=7$. From (\ref{E:k1+k2}) we have $11bc=42k_1+35k_2$, where $b+c=26$ and $b\geq c$. Again we only consider the case when there are no blocks of size at least $9$ in $BD(g)$. Therefore, by Lemma \ref{L:squares} for $N=14$, and $s=8$, we have $k_1+k_2\geq 48$. Consequently, simplifying $11bc\geq 35*48$ we have $bc\geq 153$. Together with the restrictions on $b$ and $c$ it gives us $b=14$, $c=12$ and $264=6k_1+5k_2$. 
  
  Let $BD(g)$ contain an element of size $8$. If $SBD(g)\neq \{8,6\}$ then all elements of $SBD(g)\setminus\{8\}$ are not greater than $5$. Therefore, by Lemma \ref{L:squares} $k_1+k_2\geq 8*6+5=53$ and we get a contradiction with $264=6k_1+5k_2$. So we conclude, that $SBD(g)=\{8,6\}$.  Without loss of generality let first six coordinate positions form this block of size $6$. Consider a vertex of $J(14,7)$ with zeros in these coordinate positions. Obviously, this vertex may has only $0$ or $42$ vertices from other cell and we get a contradiction.      

  Let $BD(g)$ contain an element of size $7$. If $SBD(g)\neq \{7,7\}$ then all elements of $SBD(g)\setminus\{7\}$ are not greater the $6$. Therefore, by Lemma \ref{L:squares} $k_1+k_2\geq 7*7+6=55$ and we get a contradiction with $264=6k_1+5k_2$.  So we conclude, that $SBD(g)=\{7,7\}$. Continuing in the same manner we see that a vertex having zeros in positions from one of these blocks can have only $0$ or $49$ vertices from other cell.  
  
 If all blocks in $BD(g)$ have size not greater than $6$ then by Lemma \ref{L:squares} for $N=14$, and $s=6$, we have $k_1+k_2\geq 60$ that contradicts to $264=6k_1+5k_2$.

 \end{enumerate}
 For $w\in \{4,5,6\}$, by system of equations (\ref{E:k1+k2}) we have that $bc$ is divisible by $w$. Using the equality $b+c=4w-2$, it easy to find all possible pairs $b$, $c$ and ensure that each of these possible pairs belongs to one of Constructions \ref{C:w}, \ref{C:2w_easy}, \ref{C:2w_hard} and \ref{C:3w}.      
 
 \end{proof}

\section{Acknowledgements}

The author is grateful to Ivan Mogilnykh for interesting discussions.

\end{document}